\theoremstyle{plain}
\newtheorem{theorem}{Theorem}[section]
\newtheorem{proposition}[theorem]{Proposition}
\newtheorem{lemma}[theorem]{Lemma}
\newtheorem{corollary}[theorem]{Corollary}
\newtheorem{remark}[theorem]{Remark}
\theoremstyle{definition}
\newcommand{\R}{\mathbb{R}}
\newcommand{\Z}{\mathbb{Z}}
\newcommand{\cL}{\mathcal{L}}
\newcommand{\bx}{\bar{x}}
\newcommand{\hx}{\hat{x}}
\newcommand{\tx}{\tilde{x}}
\newcommand{\ty}{\tilde{y}}
\newcommand{\be}{{\bf e_1}}
\newcommand{\bydef}{\stackrel{\mbox{\tiny\textnormal{\raisebox{0ex}[0ex][0ex]{def}}}}{=}}
\def\N{\mathbb{N}}
\title{Rigorous numerics for nonlinear operators with tridiagonal dominant linear part}
\author{Maxime Breden \thanks{CMLA, ENS Cachan \& CNRS, 61 avenue du Pr\'esident Wilson, 94230 Cachan, France. {\tt mbreden@ens-cachan.fr}}
\and Laurent Desvillettes \thanks{CMLA, ENS Cachan \& CNRS, 61 avenue du Pr\'esident Wilson, 94230 Cachan, France. {\tt desville@cmla.ens-cachan.fr}}
\and Jean-Philippe Lessard\thanks {D\'epartement de Math\'ematiques et de Statistique, Universit\'e Laval, 1045 avenue de la M\'edecine, Qu\'ebec, QC, G1V0A6, Canada. {\tt jean-philippe.lessard@mat.ulaval.ca}}}
\begin{document}

\maketitle

\begin{abstract}
We present a method designed for computing solutions of infinite dimensional nonlinear operators $f(x)=0$ with a tridiagonal dominant linear part. We recast the operator equation into an equivalent Newton-like equation $x=T(x)=x-Af(x)$, where $A$ is an approximate inverse
 of the derivative $Df(\bx)$ at an approximate solution $\bx$. We present rigorous computer-assisted calculations showing that $T$ is a contraction
 near $\bx$, thus yielding the existence  of a solution. Since $Df(\bx)$ does not have an asymptotically diagonal dominant structure, the computation of $A$ is not straightforward. This paper provides ideas for computing $A$, and proposes a new rigorous  method for
proving existence of solutions of nonlinear operators with tridiagonal dominant linear part.
\end{abstract}

\begin{center}
{\bf \small Keywords} \\ \vspace{.05cm}
{ \small Tridiagonal operator $\cdot$ Contraction mapping $\cdot$ Rigorous numerics $\cdot$ Fourier series }
\end{center}

\begin{center}
{\bf \small Mathematics Subject Classification (2010)} \\ \vspace{.05cm}
{ \small 47H10 $\cdot$ 97N20 $\cdot$ 42A10 $\cdot$ 65L10 $\cdot$ 34B08}
\end{center}

\section{Introduction}
\label{sec:Intro}

Tridiagonal operators naturally arise in the theory of orthogonal polynomials, ordinary differential equations (ODEs), continued fractions, numerical analysis of partial differential equations (PDEs), integrable systems, quantum mechanics and solid state physics. Some differential operators can be represented by infinite tridiagonal matrices acting in sequence spaces, as it is the case for instance for differentiation in frequency space of the Hermite functions. Other examples come from the study of ODEs like the Mathieu equation, the spheroidal wave equation, the Whittaker-Hill equation and the Lam\'e equation. 

While  many well-developed methods and efficient algorithms already exist in the literature for solving linear tridiagonal matrix equations and computing their inverses, our own
 method has a different flavour. We aim at developing a computational method in order to prove, in a mathematically rigorous and constructive sense, existence of solutions to
 infinite dimensional nonlinear equations of the form
\begin{equation} \label{eq:general_f=0}
f(x)=\cL(x)+N(x)=0,
\end{equation}
where $\cL$ is a tridiagonal linear operator and $N$ is a nonlinear operator. The domain of the operator $f$ is the space of algebraically decaying sequences 
\begin{equation} \label{eq:Omega^s}
\Omega^s \bydef \left\{ x = (x_k)_{k \ge 0} : \|x\|_s \bydef \sup_{k \ge 0} \{|x_k| \omega_k^s\} < \infty \right\},
\end{equation}
where
\[
\omega_k^s \bydef  \left\{
\begin{array}{ll}
1, & k=0, \\
k^s,&  k\geq 1. 
\end{array}
\right.
\]

The assumptions on the linear and nonlinear parts of \eqref{eq:general_f=0} are that $\cL:\Omega^s \rightarrow \Omega^{s-s_L}$ and $N:\Omega^s \rightarrow \Omega^{s-s_N}$, 
for some $s_L>s_N$. Intuitively, this means that the linear part {\em dominates} the nonlinear part. Since $\Omega^{s_1} \subset \Omega^{s_2}$ for $s_1>s_2$, one can see that $f$ maps $\Omega^s$ into $\Omega^{s-s_L}$.

General nonlinear operator equations of the form $f(x)=0$ defined on the Banach space $\Omega^s$ arise in the
 study of bounded solutions of finite and infinite dimensional dynamical systems. For instance, $x=(x_k)_{k \ge 0}$ may be the infinite sequence of Fourier coefficients of a periodic solution of an ODE, a periodic solution of a delay differential equation (DDE) or an equilibrium solution of a PDE with Dirichlet, periodic or Neumann boundary conditions. The unknown $x$ may also be the infinite sequence of Chebyshev coefficients of a solution of a boundary value problem (BVP), the Hermite coefficients of a solution of an ODE defined on an unbounded domain, or the Taylor
 coefficients of the solution of a Cauchy problem. In the case when the differential equation is smooth, the decay rate of the coefficients of $x$ will be algebraic or even exponential \cite{MR1874071}. In the present paper, we chose to solve \eqref{eq:general_f=0} in the weighed $\ell^\infty$ Banach space $\Omega^s$ which corresponds to $C^k$ solutions. In order to exploit the analyticity of the solutions, we could follow the idea of \cite{HLM} and solve \eqref{eq:general_f=0} in weighed $\ell^1$ Banach spaces. This choice of space is not considered in the present paper.

Recently, several attempts to solve $f(x)=0$ in $\Omega^s$ have been successful. They belong to a field now called {\em rigorous numerics}. This field aims at constructing algorithms that provide approximate solutions to a given problem, together with precise bounds implying the existence
of an exact solution in the mathematically rigorous sense. Equilibria of PDEs \cite{MR1838755,MR2126387,MR2718657}, periodic solutions of DDEs \cite{MR2871794}, fixed points of infinite dimensional maps \cite{MR2067140} and periodic solutions of ODEs \cite{MR2166710,MR3032858} have been computed using such methods. 

One popular idea in rigorous numerics is to recast the problem $f(x)=0$ as a problem of fixed point of a Newton-like equation of the form $T(x)=x-Af(x)$, where $A$ is an approximate inverse of $Df(\bx)$, and $\bx$ is a numerical approximation obtained by computing a finite dimensional projection of $f$. In \cite{MR1838755,MR2126387,MR2871794, MR2067140,MR3032858,MR2718657}, the nonlinear equations under study have asymptotically diagonal or block-diagonal dominant linear part,  which helps a lot in the computation of approximate inverses.
 In contrast, the present work considers problems with tridiagonal dominant linear part. To the best of our knowledge, this is the first attempt to compute rigorously solutions of such problems. While our proposed approach is designed for a specific class of operators (see assumptions \eqref{eq:asump1_tridiag} and \eqref{eq:asump2_tridiag}), we believe that it can be seen as a first step toward rigorously solving more complicated nonlinear operators with tridiagonal dominant linear part.

The paper is organized as follows. In Section~\ref{sec:tridiagonal}, we present a method enabling to compute (with the help of the computer)
 pseudo-inverses of tridiagonal operators of a certain class. In Section~\ref{sec:rigorous_computational method}, we recast the problem $f(x)=0$ as a fixed point problem $T(x)=x-Af(x)$,
 where $A$ is a pseudo-inverse, and we present the rigorous computational method to prove existence of fixed points of $T$. In Section~\ref{sec:example}, we present an application and finally,
 in Section~\ref{sec:conclusion}, we conclude by presenting some interesting future directions. 

\section{Computing pseudo-inverses of tridiagonal operators}
\label{sec:tridiagonal}

This Section is devoted to the construction of a pseudo-inverse of a linear operator with tridiagonal tail (see \eqref{eq:opertor_A_dag}). We begin this Section by specifying the assumptions that
 we make on the growth of the tridiagonal terms. Then we use an LU-decomposition to formally obtain a formula for the pseudo-inverse. Finally,
 we check that the (formally defined) pseudo-inverse has good mapping properties
 (see Proposition~\ref{prop:A_tri_domain_image}).
\bigskip

Given three sequences $(\lambda_k)_{k \ge 0}$, $(\mu_k)_{k \ge 0}$, $(\beta_k)_{k \ge 0}$ and $x \in \Omega^s$,
we define the tridiagonal linear operator (acting on $x$) $\cL(x)=(\cL_k(x))_{k \ge 0}$ of \eqref{eq:general_f=0} by

\begin{equation} \label{eq:L_k_tridiagonal}
\cL_k(x) =\lambda_{k} x_{k-1} + \mu_k x_k + \beta_k x_{k+1}, ~~ k \ge 1,
\end{equation}
and $\cL_0(x) =  \mu_0 x_0 + \beta_0 x_{1}$. Assume that there exist real numbers $s_L>0$, $0<C_1\leq C_2$ and an integer $k_0$ such that 
\begin{equation} \label{eq:asump1_tridiag}
\forall ~ k\geq 0,\quad \left| \frac{\lambda_k}{\omega_k^{s_L}}\right|,\left| \frac{\mu_k}{\omega_k^{s_L}}\right|,\left| \frac{\beta_k}{\omega_k^{s_L}}\right|\leq C_2 \quad \text{and} \quad  \forall ~ k\geq k_0,\quad C_1\leq \left| \frac{\mu_k}{\omega_k^{s_L}}\right|.
\end{equation}
Assume further the existence of $\displaystyle \delta \in \left(0,\frac{1}{2}\right)$ and $k_0 \ge 0$ such that
\begin{equation} \label{eq:asump2_tridiag}
\forall  ~k\geq k_0,\quad \left| \frac{\lambda_k}{\mu_k} \right|, \left| \frac{\beta_k}{\mu_k} \right| \le \delta.
\end{equation}
Then, under assumptions \eqref{eq:asump1_tridiag} and \eqref{eq:asump2_tridiag}, $\cL$ defined by \eqref{eq:L_k_tridiagonal} is a tridiagonal operator which maps $\Omega^s$ into $\Omega^{s-s_L}$.  Indeed, if $x \in \Omega^s$, then
\begin{eqnarray*}
\|\cL(x)\|_{s-s_L} &=& \sup_{k \ge 0} \{| \cL_k(x)  | \omega_k^{s-s_L}\} \\
&\le& C_2 \left( \sup_{k \geq  1} \{| x_{k-1} | \omega_k^s \} + \sup_{k \geq  0} \{| x_k | \omega_k^s \} +  \sup_{k \geq  0} \{|x_{k+1}| \omega_k^s\} \right) 
< \infty.
\end{eqnarray*}

From now on, assume for the sake of simplicity that $s_N=0$, that is the nonlinear part $N$ of \eqref{eq:general_f=0} maps $\Omega^s$ into $\Omega^s$. 
Since $\Omega^s$ is an algebra under discrete convolutions when $s>1$ (e.g. see \cite{MR2718657,MR3125637}), then any $N$ which is a combination of such convolutions maps $\Omega^s$ into $\Omega^s$. Assume that using a finite dimensional projection $f^{(m)}:\R^m \rightarrow \R^m$ of \eqref{eq:general_f=0}, we computed a numerical approximation $\bx$ such that $f^{(m)}(\bx) \approx 0$. We identify $\bx \in \R^m$ and $\bx =(\bx,0,0,0,0,\dots) \in \Omega^s$.
We then try to construct a ball 
\[
B_{\bx}(r) = \bx + B_{0}(r) = \bx +  \left\{ x \in \Omega^s : \| x \|_s \le r \right\} = \left\{ x \in \Omega^s : \| x - \bx\|_s \le r \right\}
\]
centered at $\bx$ and containing a unique solution of  \eqref{eq:general_f=0}, by showing that a specific
 Newton-like operator $T(x)=x-Af(x)$ is a contraction on $B_{\bx}(r)$. This requires the construction of
an approximate inverse $A$ of $Df(\bx) = \cL (\bx) + DN(\bx)$. In order to do so, the structures of $\cL(\bx)$ and $DN(\bx)$ need to be understood. From \eqref{eq:L_k_tridiagonal} and \eqref{eq:asump1_tridiag},  $\cL(\bx)$ is a tridiagonal operator with entries growing to infinity at the rate $k^{s_L}$. Moreover, since $DN(\bx)$ maps $\Omega^s$ into $\Omega^s$, it is a bounded linear operator. As mentioned above, the expectation is that the coefficients of $\bx$ decay fast to zero. This implies that a reasonable approximation $A^\dagger$ of $Df(\bx)$ is given by

\begin{equation}
\label{eq:opertor_A_dag}
A^\dagger \bydef  \left( \begin{array}{cccccc}
&   &   &   &   &   \\
& D &   &   & 0 &   \\
&   &   & \beta_{m-1} &   &  \\
&   & \lambda_{m} & \mu_m & \beta_{m} &  \\ 
&  0  &             &  \lambda_{m+1} & \mu_{m+1} & \beta_{m+1}
\end{array} \right),
\end{equation}
with $D \bydef Df^{(m)}(\bx)$ for $m$ large enough. We wish to find the inverse of $A^{\dag}$ in terms of $D$, $(\beta_k)_{k \ge m-1}$, $(\mu_k)_{k\ge m}$ and $(\lambda_k)_{k\ge m}$. We assume therefore that 
\begin{equation} \label{eq:Adagx=y}
A^\dagger x = y, 
\end{equation}
 where $x$ and $y$ are the infinite vectors
$$ 
x = \left( \begin{array}{c} x_0 \\ x_1 \\ . \\ . \\ \\ \end{array} \right), \qquad y= \left( \begin{array}{c} y_0\\ y_1 \\ . \\ . \\ \\ \end{array} \right). 
$$  
The infinite part of \eqref{eq:Adagx=y} writes
\begin{equation} \label{eq:tridiagonal_tail}
\left( \begin{array}{ccccc}
 \mu_m    &  \beta_m   & 0           &          0  & ...     \\
 \lambda_{m+1}  &  \mu_{m+1} & \beta_{m+1} &          0  & ...   \\
   0      &  \lambda_{m+2} & \mu_{m+2} & \beta_{m+2} & ...  \\
  . & .  & . &  . &  ...  
   \end{array} \right) \,\, \left( \begin{array}{c} x_m \\ x_{m+1} \\ . \\ . \\ \\ \end{array} \right)
= \left( \begin{array}{c} y_m - \lambda_m\, x_{m-1} \\ y_{m+1} \\ . \\ . \\ \\ \end{array} \right).
\end{equation}

We introduce the notations of the book of P.G. Ciarlet (see Theorem~4.3-2 on page 142 in \cite{MR1015713}):
 $$ a_2= \lambda_{m+1}, \quad a_3 = \lambda_{m+2},..., \qquad 
 b_1=\mu_m, \quad b_2= \mu_{m+1},..., \qquad
 c_1=\beta_m, \quad c_2=\beta_{m+1},..., $$
 and $(\delta_n)_{n\in \N}$ defined by the induction formula
$$\delta_0 = 1,~~ \delta_1=b_1,~~{\rm and}~~ \delta_n = b_n \, \delta_{n-1} - a_n\, c_{n-1}\, \delta_{n-2}, ~~~ {\rm for}~~ n \ge 2.$$
Note that only the $\delta_n$ are really useful.

Let us define the tridiagonal operator $T$ by 
\begin{equation} \label{eq:Ciarlet_T}
T \bydef  \left( \begin{array}{ccccc}
b_1    &  c_1   & 0           &          0  & ...     \\
a_2  &  b_2 & c_2 &          0  & ...   \\
   0      &  a_3 & b_3 & c_3 & ...  \\
  . & .  & . &  . &  ...  
\end{array} \right).
\end{equation}
For any infinite vector $x=(x_0,\ldots,x_k,\ldots)^T$, we introduce the notation
\begin{equation*}
x_F \bydef  (x_0,\ldots,x_{m-1})^T \quad\text{and}\quad x_I \bydef  (x_m,\ldots,x_{m+k},\ldots)^T.
\end{equation*}
Using the notation $\be=(1,0,0,0,0,\cdots)^T$, the system \eqref{eq:tridiagonal_tail} becomes 
$$ 
T x_I = y_I -  \lambda_{m}\, x_{m-1} \be.
$$

From Theorem~4.3-2 in \cite{MR1015713}, we compute an $LU$-decomposition of the tridiagonal operator defined in \eqref{eq:Ciarlet_T} as $T=L_I U_I$, where
\begin{equation} \label{eq:LandU}
L_I \bydef 
\left( \begin{array}{cccc}
 1    &  0   & 0           &     ...     \\
 a_2\, \frac{\delta_0}{\delta_1} &  1 &          0  & ...   \\
   0      &  a_3\, \frac{\delta_1}{\delta_2} & 1 &  ...  \\
  . & .  & . &   ...  
   \end{array} \right) ~~ {\rm and} ~~
U_I \bydef   \left( \begin{array}{cccc}
 \frac{\delta_1}{\delta_0}    &  c_1                      & 0           &     ...     \\
 0                            & \frac{\delta_2}{\delta_1} &  c_2        &     ...   \\
  0                            & 0 &  \frac{\delta_3}{\delta_2}        &     ...   \\
  . & .  & . &   ...  
   \end{array} \right).
\end{equation}
Hence, the system \eqref{eq:tridiagonal_tail} becomes $L_I z_I = y_I  -  \lambda_m \, x_{m-1} \be$ combined with
$U_I x_I = z_I$, that is
\begin{equation} \label{eq:L-system}
\left( \begin{array}{cccc}
 1    &  0   & 0           &     ...     \\
 a_2\, \frac{\delta_0}{\delta_1} &  1 &          0  & ...   \\
   0      &  a_3\, \frac{\delta_1}{\delta_2} & 1 &  ...  \\
  . & .  & . &   ...  
   \end{array} \right) \,\, \left( \begin{array}{c} z_m \\ z_{m+1} \\ . \\ . \\ \\ \end{array} \right)
= \left( \begin{array}{c} y_m - \lambda_m\, x_{m-1} \\ y_{m+1} \\ . \\ . \\ \\ \end{array} \right),
\end{equation}
combined with
\begin{equation} \label{eq:U-system}
\left( \begin{array}{cccc}
 \frac{\delta_1}{\delta_0}    &  c_1                      & 0           &     ...     \\
 0                            & \frac{\delta_2}{\delta_1} &  c_2        &     ...   \\
  0                            & 0 &  \frac{\delta_3}{\delta_2}        &     ...   \\
  . & .  & . &   ...  
   \end{array} \right) \,\, \left( \begin{array}{c} x_m \\ x_{m+1} \\ .  \\ \\ \end{array} \right)
= \left( \begin{array}{c} z_m  \\ z_{m+1} \\ . \\ \\ \end{array} \right).
\end{equation} 
Both infinite systems \eqref{eq:L-system} and \eqref{eq:U-system} can be explicitly solved.

System \eqref{eq:L-system} leads to 
\begin{equation*}
z_{m} = y_{m}-\lambda_m x_{m-1},
\end{equation*}
and for any $k\ge 1$
$$ z_{m+k} = y_{m+k} + \sum_{l=1}^k (-1)^l \, a_{k-l+2}\,..\,a_{k+1}\, \frac{\delta_{k-l}}{\delta_k}\,
y_{m+k-l} + (-1)^{k+1}\, a_2\,..\,a_{k+1} \,\frac{\delta_{0}}{\delta_k}\,\lambda_m x_{m-1},$$
which we rewrite with infinite matrix/vectors notations as
\begin{equation}\label{1i}
z_I = {L_I}^{-1} [y_I  -  \lambda_m\, x_{m-1} \be ] = {L_I}^{-1} y_I - \lambda_m x_{m-1} \, v_I, 
\end{equation}
where
$$ 
z_I = \left( \begin{array}{c} z_m \\ z_{m+1} \\ z_{m+2}\\  \\ \vdots \\ \\ \end{array} \right), ~~
y_I = \left( \begin{array}{c} y_m\\ y_{m+1} \\ y_{m+2}\\  \\ \vdots \\ \\ \end{array} \right), ~~ 
v_I \bydef  {L_I}^{-1} \be = \,\left( \begin{array}{c} 1\\ -a_2\, \frac{\delta_{0}}{\delta_1}\\ a_3\,a_2\,
\frac{\delta_{0}}{\delta_2}  \\  -a_4\,a_3\,a_2\, \frac{\delta_{0}}{\delta_3}\\ \vdots \\ \\ \end{array} \right). 
$$  

The second system \eqref{eq:U-system} leads to the infinite sum (for any $k\ge 0$)
\begin{equation*}
x_{m+k} = \frac{\delta_{k}}{\delta_{k+1}} z_{m+k} + \sum_{l=1}^{\infty} (-1)^l\, \frac{\delta_k}{\delta_{k+l+1}} \, c_{k+1}\,..\,c_{k+l}\, z_{m+k+l},
\end{equation*}
which we also rewrite with infinite matrix/vector notations as 
\begin{equation}\label{2i}
x_I = {U_I}^{-1} z_I.
\end{equation}
Coupling (\ref{1i}) and (\ref{2i}), we end up with 
\begin{equation}\label{3i}
x_I =  {U_I}^{-1} z_I =  {U_I}^{-1} [ {L_I}^{-1} y_I - \lambda_m x_{m-1} \, v_I] = {U_I}^{-1} {L_I}^{-1} y_I - \lambda_m x_{m-1} w_I,
\end{equation}
where $w_I \bydef  U_I^{-1}v_I$.
Denoting $\left( {U_I}^{-1} {L_I}^{-1}\right)_{r_0}$ the first row of the infinite matrix ${U_I}^{-1} {L_I}^{-1}$ and $\left( w_I \right)_{0}$ the first element of $w_I$, we can rewrite the first line of (\ref{3i}) as 

\begin{equation}\label{4i}
x_m =  \left( {U_I}^{-1} {L_I}^{-1} \right)_{r_0} y_I - \lambda_m x_{m-1} \left( w_I \right)_0. 
\end{equation}

We now investigate the finite part of the linear system \eqref{eq:Adagx=y}, which is given by
$$ D\, \left( \begin{array}{c} x_0 \\ x_{1} \\ . \\ . \\ x_{m-2} \\ x_{m-1}\\ \end{array} \right) 
+ \left( \begin{array}{c} 0\\ 0 \\ . \\ . \\ 0 \\ \beta_{m-1}\, x_m \\ \end{array} \right)=
\left( \begin{array}{c} y_0 \\ y_{1} \\ . \\ . \\ y_{m-2} \\ y_{m-1} \\ \end{array} \right),
$$

or, according to (\ref{4i}),

$$ D\, \left( \begin{array}{c} x_0 \\ x_{1} \\ . \\ . \\ x_{m-2} \\ x_{m-1}\\ \end{array} \right)  
+ \beta_{m-1}\, 
\left( \begin{array}{c} 0\\ 0 \\ . \\ . \\ 0 \\ \left( {U_I}^{-1} {L_I}^{-1} \right)_{r_0} y_I - \lambda_m x_{m-1} \left( w_I \right)_0  \\ \end{array} \right)=
\left( \begin{array}{c} y_0 \\ y_{1} \\ . \\ . \\ y_{m-2} \\ y_{m-1} \\ \end{array} \right).$$

Letting
$$ K \bydef D - \beta_{m-1}\lambda_m \,  \left( \begin{array}{ccccc}
 0    &  0   & ...   & 0   & 0      \\
 0    &  0   & ...   & 0   & 0      \\
 \vdots    &  \vdots   & \ddots   & \vdots   & \vdots      \\
 0    &  0   & ...   & 0   & 0      \\
 0    &  0   & ...   & 0   & \left( w_I \right)_{0}
\end{array} \right), $$
we consider its inverse $K^{-1}$. We denote the last column of $K^{-1}$ by $(K^{-1})_{c_{m-1}}$, its last row by $(K^{-1})_{r_{m-1}}$, and its last (``south-east") element by $(K^{-1})_{m-1,m-1}$. 
Then we obtain 
\begin{eqnarray}
x_F &=& K^{-1} y_F  - \beta_{m-1} \left\{ \left( {U_I}^{-1} {L_I}^{-1} \right)_{r_0} y_I  \right\} (K^{-1})_{c_{m-1}} \nonumber
\\
&=&  K^{-1} y_F  - \beta_{m-1}\, \bigg( \bigg\{ (K^{-1})_{c_{m-1}} \bigg\} \otimes \bigg\{ \left( U_I^{-1} {L_I}^{-1} \right)_{r_0}
\bigg\} \bigg) y_I ,  \label{1f}
\end{eqnarray}
using the tensor product notation. The last line of this identity reads 

\begin{equation}\label{2f}
 x_{m-1} = (K^{-1})_{r_{m-1}}\, y_F - \beta_{m-1}\,   
\left\{ \left( {U_I}^{-1} {L_I}^{-1} \right)_{r_0} y_I  \right\} (K^{-1})_{m-1,m-1}.
\end{equation}
Coming back to (\ref{3i}) and using \eqref{2f}, we see that

\begin{eqnarray}
\nonumber
x_I &=& {U_I}^{-1} {L_I}^{-1} y_I - \lambda_m x_{m-1} w_I \\
\nonumber
&=& {U_I}^{-1} {L_I}^{-1} y_I  \\
\nonumber
&& - \lambda_m \bigg[  
(K^{-1})_{r_{m-1}}\, y_F - \beta_{m-1} \left\{ \left( {U_I}^{-1} {L_I}^{-1} \right)_{r_0} y_I  \right\} (K^{-1})_{m-1,m-1}
\bigg]w_I \\
\nonumber
&=& {U_I}^{-1} {L_I}^{-1} y_I  -\lambda_m \, w_I\bigg\{ 
(K^{-1})_{r_{m-1}} \, y_F \bigg\} \\
\nonumber
&& + \beta_{m-1}\lambda_m \, (K^{-1})_{m-1,m-1}\, w_I  \bigg\{ \left( {U_I}^{-1} {L_I}^{-1} \right)_{r_0} y_I \bigg\} \\
&=& 
-\lambda_m\bigg( \bigg\{  w_I  \bigg\} \otimes \, \bigg\{  (K^{-1})_{r_{m-1}} \bigg\} \bigg) \, y_F
\label{5i}
\\ 
&&
+  \bigg( {U_I}^{-1} {L_I}^{-1} + \beta_{m-1}\lambda_m\, (K^{-1})_{m-1,m-1}\,    
\bigg\{   w_I  \bigg\} \otimes \, \bigg\{ \left(  {U_I}^{-1} {L_I}^{-1} \right)_{r_0} \bigg\}  \, \bigg) \, y_I.
\nonumber
\end{eqnarray}

Putting together (\ref{1f}) and (\ref{5i}), we end up with

$$ (A^\dagger)^{-1} = \left( \begin{array}{cc} 
 K^{-1} & 
 - \beta_{m-1}\, \bigg( \bigg\{ (K^{-1})_{c_{m-1}} \bigg\} \otimes \bigg\{ \left( U_I^{-1} {L_I}^{-1} \right)_{r_0}
\bigg\} \bigg)
  \\
 -\lambda_m \bigg\{ w_I \bigg\} \otimes \, \bigg\{  (K^{-1})_{r_{m-1}} \bigg\} &
 {U_I}^{-1} {L_I}^{-1} + \tilde \Lambda 
 \end{array} \right),$$
where
$$ \tilde  \Lambda \bydef \beta_{m-1}\lambda_m \, (K^{-1})_{m-1,m-1}\,    
\bigg\{   w_I  \bigg\} \otimes \, \bigg\{ \left(  {U_I}^{-1} {L_I}^{-1} \right)_{r_0} \bigg\}
. $$

In order to get an approximate (pseudo) inverse of $A^\dagger$, we would like to get a numerical approximation of $K^{-1}$.
 However the definition of $K$ involves $\left(w_I\right)_0$,
 which cannot be explicitly computed. By definition, $w_I=U_I^{-1}L_I^{-1} \be$, so using again the computations made in this Section,
 we get
\begin{align*}
\left(w_I\right)_0 & = \left(U_I^{-1} v_I\right)_0 \\
& = \frac{\delta_0}{\delta_1}v_m + \sum_{l=1}^{\infty}(-1)^l\frac{\delta_0}{\delta_{l+1}}c_1\ldots c_l\, v_{m+l} \\
& = \frac{\delta_0}{\delta_1} + \sum_{l=1}^{\infty}\frac{\delta_0^2}{\delta_l \delta_{l+1}}c_1\ldots c_l\, a_2\ldots a_{l+1}.
\end{align*}
Given a computational parameter $L$, we define
\begin{equation}\label{eq:tilde_w}
\tilde w \bydef \frac{\delta_0}{\delta_1} + \sum_{l=1}^{L-1}\frac{\delta_0^2}{\delta_l \delta_{l+1}}c_1\ldots c_l\, a_2\ldots a_{l+1},
\end{equation}
and
\begin{equation*}
\tilde K  \bydef D - \beta_{m-1}\lambda_m \,  \left( \begin{array}{ccccc}
 0    &  0   & ...   & 0   & 0      \\
 0    &  0   & ...   & 0   & 0      \\
 \vdots    &  \vdots   & \ddots   & \vdots   & \vdots      \\
 0    &  0   & ...   & 0   & 0      \\
 0    &  0   & ...   & 0   & \tilde w
\end{array} \right).
\end{equation*}

We now can consider $A_m$ a numerically computed inverse of $\tilde K$ and then define the approximate (pseudo) inverse of $A^\dagger$ as
\begin{equation} \label{eq:operator_A}
A  \bydef \left( \begin{array}{cc} 
A_m & 
 - \beta_{m-1}\, \bigg( \bigg\{ (A_m)_{c_{m-1}} \bigg\} \otimes \bigg\{ \left( U_I^{-1} {L_I}^{-1} \right)_{r_0}
\bigg\} \bigg)
  \\
 -\lambda_m \bigg\{ w_I \bigg\} \otimes \, \bigg\{  (A_m)_{r_{m-1}} \bigg\} &
 {U_I}^{-1} {L_I}^{-1} + \Lambda 
 \end{array} \right),
 \end{equation}
where
$$ \Lambda  \bydef \beta_{m-1}\lambda_m \, (A_m)_{m-1,m-1}\,    
\bigg\{   w_I  \bigg\} \otimes \, \bigg\{ \left(  {U_I}^{-1} {L_I}^{-1} \right)_{r_0} \bigg\}
. $$


\begin{lemma} \label{lem:U_I_inv}
Assume that $m\geq k_0$ and $\delta<\frac{1}{2}$. Then $U_I^{-1}$ maps $\Omega^s$ into $\Omega^{s+s_L}$.
\end{lemma}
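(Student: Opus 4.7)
Plan. The explicit computation in \eqref{eq:U-system} gives, for every $k \ge 0$,
\begin{equation*}
(U_I^{-1}z)_k = \frac{\delta_k}{\delta_{k+1}}\, z_k + \sum_{l=1}^{\infty} (-1)^l\, \frac{\delta_k\, c_{k+1}\cdots c_{k+l}}{\delta_{k+l+1}}\, z_{k+l}.
\end{equation*}
The plan is: (i) obtain a sharp lower bound on the ratios $q_n \bydef \delta_n/\delta_{n-1}$, which are precisely the diagonal entries of $U_I$; (ii) use this bound to see that the product $|c_{k+1}\cdots c_{k+l}|/|q_{k+1}\cdots q_{k+l+1}|$ telescopes into a geometric term of order $(2\delta)^l/|q_{k+l+1}|$; and (iii) translate this into the weighted estimate $\|U_I^{-1}z\|_{s+s_L} \le C\|z\|_s$.

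For (i), the recurrence $\delta_n = b_n\,\delta_{n-1} - a_n c_{n-1}\,\delta_{n-2}$ rewrites as $q_n = b_n - a_n c_{n-1}/q_{n-1}$, with $q_1 = b_1$. I will prove by induction that $|q_n| \ge |b_n|/2$ for all $n\ge 1$. The hypothesis $m\ge k_0$ ensures that assumption~\eqref{eq:asump2_tridiag} applies at every index appearing, so $|a_n c_{n-1}| \le \delta^2 |b_n b_{n-1}|$; the inductive hypothesis then gives $|a_n c_{n-1}/q_{n-1}| \le 2\delta^2|b_n|$, and $\delta < 1/2$ yields $|q_n| \ge (1-2\delta^2)|b_n| \ge |b_n|/2$. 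Combining with $|b_n| \ge C_1\,\omega_{m+n-1}^{s_L}$ from~\eqref{eq:asump1_tridiag} delivers the quantitative bound $|q_n| \ge (C_1/2)\,\omega_{m+n-1}^{s_L}$.

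For (ii) and (iii), writing $|c_{k+j}|/|q_{k+j}| \le 2\delta$ (again from~\eqref{eq:asump2_tridiag} and Step~(i)) and telescoping the product gives
\begin{equation*}
|(U_I^{-1}z)_k| \le \frac{|z_k|}{|q_{k+1}|} + \sum_{l=1}^{\infty}\frac{(2\delta)^l}{|q_{k+l+1}|}\,|z_{k+l}|,
\end{equation*}
and the series converges absolutely because $2\delta<1$. Bounding $|z_{k+l}| \le \|z\|_s/\omega_{k+l}^s$ and $|q_{k+l+1}| \ge (C_1/2)\,\omega_{m+k+l}^{s_L} \ge (C_1/2)\,\omega_{k+l+1}^{s_L}$, the quantity $|(U_I^{-1}z)_k|\,\omega_k^{s+s_L}$ is bounded, for $k\ge 1$, by $\|z\|_s$ times a series whose $l$-th term is at most $(2\delta)^l\,(k/(k+l+1))^{s+s_L} \le (2\delta)^l$, provided $s+s_L\ge 0$; the row $k=0$ is handled separately using $\omega_0=1$ and the uniform positive lower bound on $|q_{l+1}|$. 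Summing the geometric series yields an estimate of the form $\|U_I^{-1}z\|_{s+s_L} \le \frac{C}{C_1(1-2\delta)}\,\|z\|_s$.

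The main obstacle is the inductive step in (i): the constant $1/2$ in the lower bound $|q_{n-1}|\ge|b_{n-1}|/2$ must be preserved through the recursion, and the threshold $\delta<1/2$ from~\eqref{eq:asump2_tridiag} is precisely what makes this possible (via $2\delta^2<1/2$); at the same time the same threshold supplies the geometric decay $(2\delta)^l$ that drives convergence in (ii)--(iii). Everything else is weight bookkeeping, which closes cleanly thanks to $s_L>0$.
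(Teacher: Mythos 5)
Your argument is correct and follows essentially the same path as the paper: control the diagonal ratios $q_n=\delta_n/\delta_{n-1}$ via the two-term recursion coming from the determinant identity, telescope to obtain geometric decay for the entries of $U_I^{-1}$, and close with the weight bound $|b_n|\ge C_1\,\omega_{m+n-1}^{s_L}$ from~\eqref{eq:asump1_tridiag}. The only substantive difference is that the paper establishes the sharper lower bound $|q_n|\ge\gamma\,|b_n|$ with $\gamma=\tfrac12+\sqrt{\tfrac14-\delta^2}$ (the attracting fixed point of $x\mapsto 1-\delta^2/x$) rather than your coarser $|q_n|\ge|b_n|/2$, yielding the geometric ratio $\theta=\delta/\gamma$ in place of $2\delta$; both suffice for the present lemma, but $\gamma$ and $\theta$ are reused verbatim in the quantitative $Y$ and $Z$ bounds of Section~3, so the tighter constant is what you would want going forward.
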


\begin{proof}
Let $z_I\in \Omega^s$ and $x_I=U_I^{-1}z_I$. Using (\ref{2i}) and the formula above, we get
\begin{align}
\label{proof:U_I_inv_1}
\left\vert x_{m+k}\right\vert &\leq \frac{\left\vert \delta_k\right\vert}{\left\vert \delta_{k+1}\right\vert}\left\vert z_{m+k}\right\vert + \sum_{l=1}^{\infty} \frac{\left\vert \delta_k\right\vert}{\left\vert \delta_{k+l+1}\right\vert} \, \left\vert c_{k+1}\right\vert\,..\,\left\vert c_{k+l}\right\vert\, \left\vert z_{m+k+l}\right\vert   \nonumber \\ 
&\leq  \frac{\left\vert \delta_k\right\vert}{\left\vert \delta_{k+1}\right\vert}\left\vert z_{m+k}\right\vert + \sum_{l=1}^{\infty} \delta^l \frac{\left\vert \delta_k\right\vert}{\left\vert \delta_{k+l+1}\right\vert} \, \left\vert b_{k+1}\right\vert\,..\,\left\vert b_{k+l}\right\vert\, \left\vert z_{m+k+l}\right\vert.
\end{align}
Now remember that for all $k\geq 2$, $\delta_k = b_k \, \delta_{k-1} - a_k\, c_{k-1}\delta_{k-2}$, so
\begin{align*}
\frac{\left\vert \delta_k\right\vert}{\left\vert \delta_{k-1}\right\vert\left\vert b_k\right\vert} &\geq 1-\frac{\left\vert a_k\right\vert\left\vert c_{k-1}\right\vert\left\vert \delta_{k-2}\right\vert}{\left\vert b_k\right\vert\left\vert \delta_{k-1}\right\vert}\\
&\geq 1-\frac{\delta^2\left\vert b_{k-1}\right\vert \left\vert \delta_{k-2}\right\vert}{\left\vert \delta_{k-1}\right\vert}.
\end{align*}
We introduce $\displaystyle{u_k  \bydef \frac{\left\vert \delta_k\right\vert}{\left\vert \delta_{k-1}\right\vert\left\vert b_k\right\vert}}$ which then satisfies 
\begin{equation*}
\left\{
\begin{aligned}
&u_1=1,\\
&u_k\geq 1-\frac{\delta^2}{u_{k-1}},\quad \forall~ k\geq 2.
\end{aligned}
\right.
\end{equation*}
The study of the inductive sequence defined as above, but with $\ge$ replaced by $=$, yields that for any $k$, $\gamma\leq u_k \leq 1$, where $\gamma  \bydef \frac{1}{2}+\sqrt{\frac{1}{4}-\delta^2}$ is the largest root of $x=1-\frac{\delta^2}{x}$ (see Figure~\ref{suite_rec}).

\begin{figure} [htbp]
\begin{center}
\subfigure[$\delta=0.2$]{\includegraphics[width=68mm]{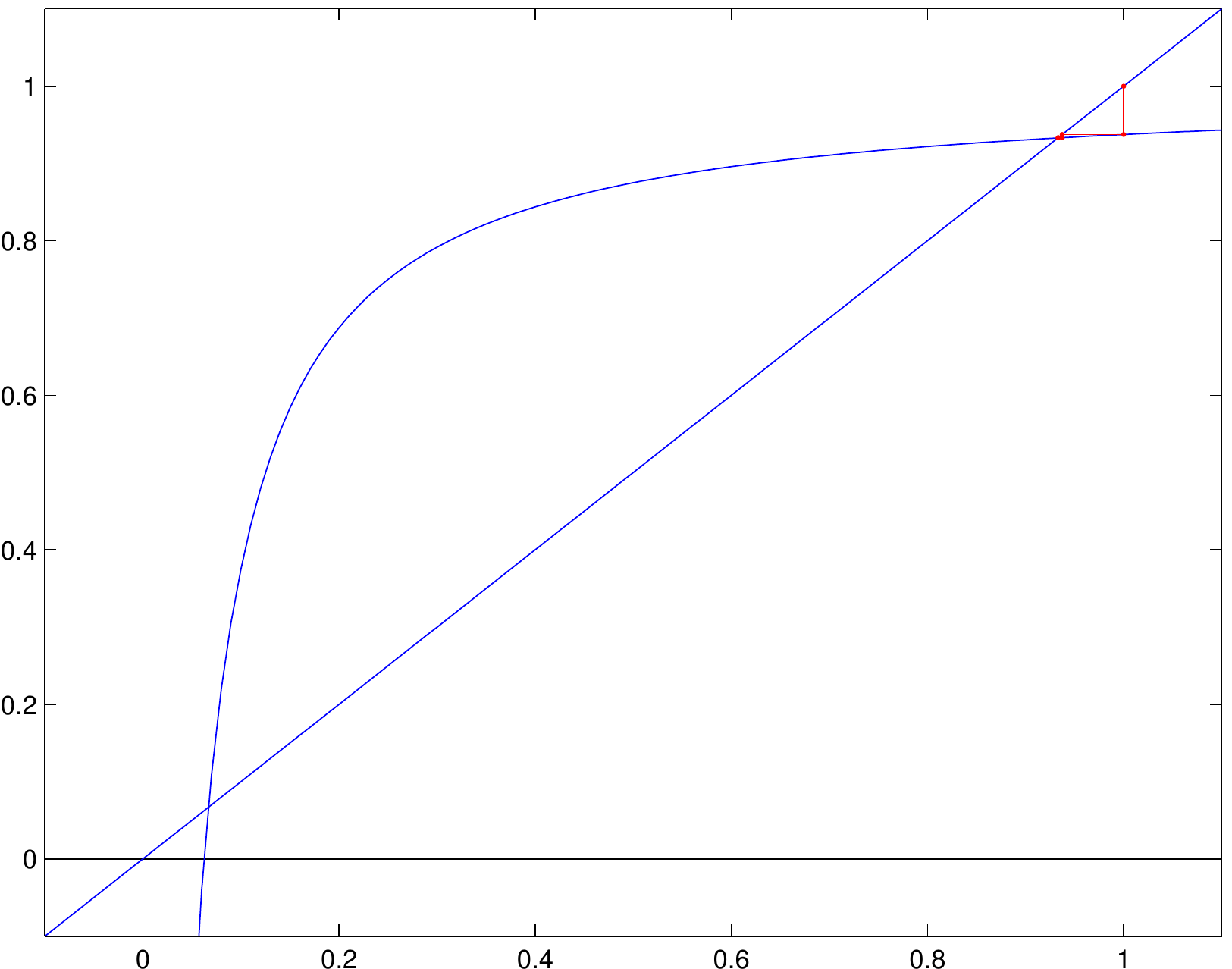}}
\subfigure[$\delta=0.5$]{\includegraphics[width=68mm]{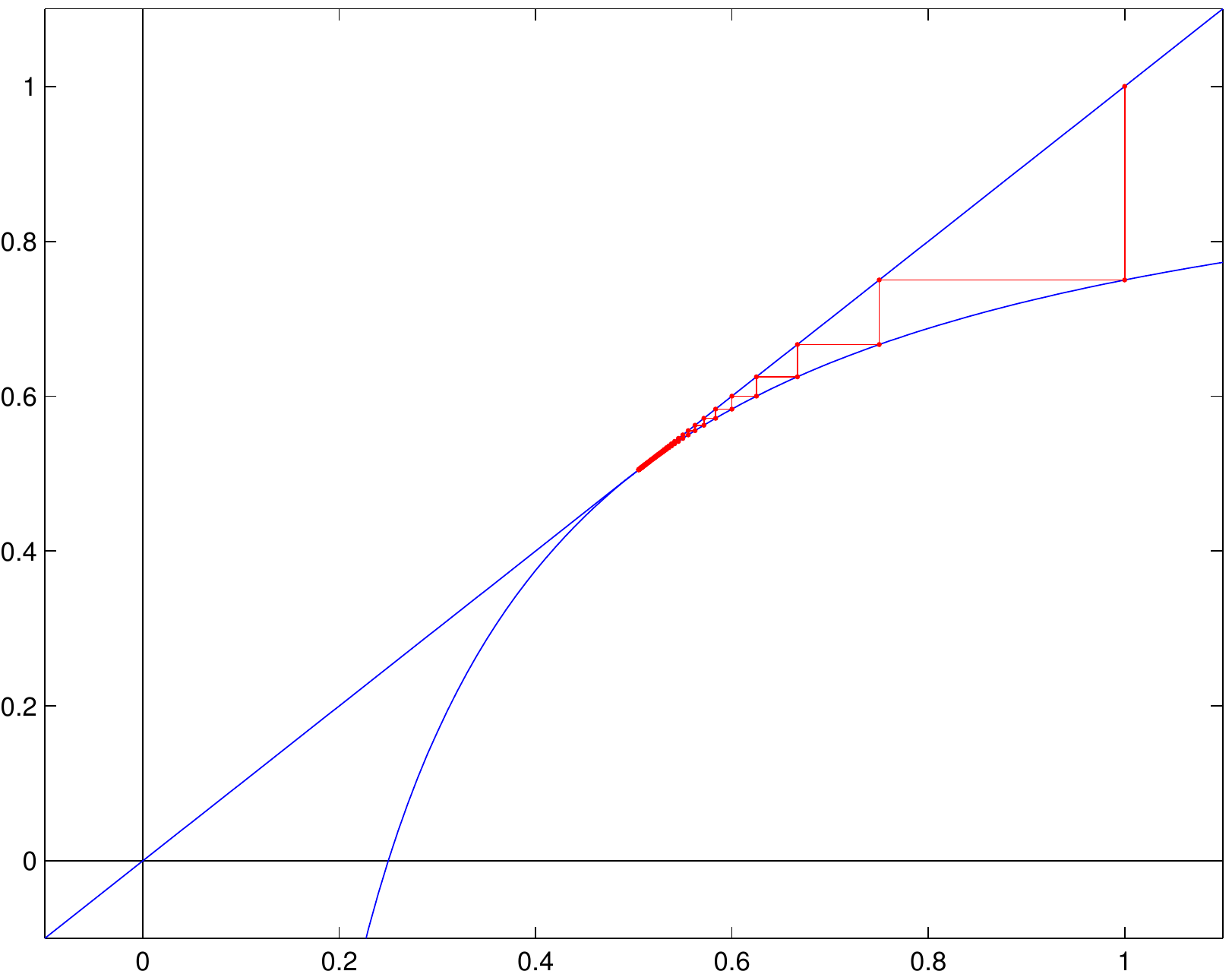}} 
\end{center}
\vspace{-.4cm}
\caption{The iterations of $u_{n+1}=1-\delta ^2/u_n$ with $u_1=1$.}
\label{suite_rec}
\end{figure}

We can then rewrite (\ref{proof:U_I_inv_1}) in order to get
\begin{align}
\label{eq:x_vs_z}
\left\vert x_{m+k}\right\vert &\leq \frac{\left\vert \delta_k\right\vert}{\left\vert \delta_{k+1}\right\vert}\left\vert z_{m+k}\right\vert + \sum_{l=1}^{\infty} \delta^l \frac{\left\vert \delta_k\right\vert\,..\,\left\vert \delta_{k+l}\right\vert}{\left\vert \delta_{k+1}\right\vert\,..\,\left\vert \delta_{k+l+1}\right\vert} \, \left\vert b_{k+1}\right\vert\,..\,\left\vert b_{k+l}\right\vert\, \left\vert z_{m+k+l}\right\vert \nonumber\\
&\leq \frac{\left\vert \delta_k\right\vert}{\left\vert \delta_{k+1}\right\vert}\left\vert z_{m+k}\right\vert + \sum_{l=1}^{\infty} \delta^l \frac{1}{u_{k+1}}\,..\,\frac{1}{u_{k+l}}\frac{\left\vert \delta_{k+l}\right\vert}{\left\vert \delta_{k+l+1}\right\vert} \left\vert z_{m+k+l}\right\vert \nonumber\\
&\leq \sum_{l=0}^{\infty} \left(\frac{\delta}{\gamma}\right)^l \frac{\left\vert \delta_{k+l}\right\vert}{\left\vert \delta_{k+l+1}\right\vert} \left\vert z_{m+k+l}\right\vert \nonumber\\
&\leq \sum_{l=0}^{\infty} \left(\frac{\delta}{\gamma}\right)^l \frac{1}{\gamma\left\vert b_{k+l+1}\right\vert} \left\vert z_{m+k+l}\right\vert\\
&\leq \frac{\left\Vert z_I\right\Vert_s}{C_1\gamma}\sum_{l=0}^{\infty} \left(\frac{\delta}{\gamma}\right)^l \frac{1}{\left(k+l+1\right)^{s_L}\left(m+k+l\right)^{s}}. \nonumber
\end{align}
Finally, since $\displaystyle{\delta<\frac{1}{2}<\gamma}$, 
\begin{equation*}
\left\vert x_{m+k}\right\vert\left(m+k \right)^{s+s_L}\leq \frac{\left\Vert z_I\right\Vert_s}{C_1\gamma}\frac{1}{1-\frac{\delta}{\gamma}}\frac{\left(m+k\right)^{s+s_L}}{\left(k+1\right)^{s_L}\left(m+k\right)^{s}}
\end{equation*}
and $x_I\in\Omega^{s+s_L}$.
\end{proof}

\begin{lemma} \label{lem:L_I_inv}
Assume that $m\geq k_0$ and $\delta<\frac{1}{2}$, Then $L_I^{-1}$ maps $\Omega^s$ into $\Omega^{s}$.
\end{lemma}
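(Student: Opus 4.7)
My plan is to mirror the proof of Lemma~\ref{lem:U_I_inv}, now invoking the explicit formula for $L_I^{-1}$ contained in \eqref{1i}. Setting $x_{m-1}=0$ in that formula (so that only the $L_I^{-1}y_I$ part remains), I obtain, for any $y_I\in\Omega^s$,
\[
(L_I^{-1}y_I)_{m+k} = y_{m+k} + \sum_{l=1}^{k}(-1)^l\, a_{k-l+2}\cdots a_{k+1}\, \frac{\delta_{k-l}}{\delta_k}\, y_{m+k-l},
\]
which I attack with the triangle inequality.

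The next step is to turn the factors $|a_j|$ into $\delta |b_j|$ using \eqref{eq:asump2_tridiag} and to telescope the ratios of $\delta$'s as $|\delta_{k-l}|/|\delta_k| = \prod_{j=k-l+1}^{k}(u_j|b_j|)^{-1}$, exactly as in the previous lemma. The products of $|b_j|$ cancel except for a surviving factor $|b_{k+1}|/|b_{k-l+1}|$, and each $u_j$ is bounded below by $\gamma$ (as established in the proof of Lemma~\ref{lem:U_I_inv}). Using \eqref{eq:asump1_tridiag} to bound $|b_{k+1}|/|b_{k-l+1}| \le (C_2/C_1)\bigl(\frac{m+k}{m+k-l}\bigr)^{s_L}$ and $|y_{m+k-l}|(m+k-l)^s\le\|y_I\|_s$, then multiplying by $(m+k)^s$, produces a bound of the form
\[
(m+k)^s|(L_I^{-1}y_I)_{m+k}| \le \|y_I\|_s\left(1 + \frac{C_2}{C_1}\sum_{l=1}^{k}\Bigl(\frac{\delta}{\gamma}\Bigr)^l \Bigl(\frac{m+k}{m+k-l}\Bigr)^{s+s_L}\right).
\]

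The hardest step will be controlling this sum uniformly in $k$, because the polynomial ratio $\bigl(\frac{m+k}{m+k-l}\bigr)^{s+s_L}$ is close to $1$ when $l\ll k$ but can blow up as $l$ approaches $k$. My resolution is the elementary bound $\frac{m+k}{m+k-l}\le l+1$, valid whenever $l\le m+k-1$ (hence throughout $1\le l\le k$, since $m\ge 1$). This converts the potential $k$-dependent blowup into a polynomial in $l$ that is absorbed by the geometric factor $(\delta/\gamma)^l$; since $\delta<1/2<\gamma$, the series $\sum_{l\ge 1}(l+1)^{s+s_L}(\delta/\gamma)^l$ converges to a finite constant independent of $k$. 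This yields $\|L_I^{-1}y_I\|_s\le C\|y_I\|_s$, proving $L_I^{-1}:\Omega^s\to\Omega^s$. Note that, in contrast to Lemma~\ref{lem:U_I_inv}, no regularity gain occurs here because $L_I$ carries $1$'s on its diagonal rather than growing entries.
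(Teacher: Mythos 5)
Your proof is correct, and the bulk of it (the explicit formula for $L_I^{-1}$, the conversion $|a_j|\le\delta|b_j|$, the telescoping of $|\delta_{k-l}|/|\delta_k|$ through the sequence $u_j$, and the lower bound $u_j\ge\gamma$) mirrors the paper exactly. The one place where you genuinely depart is the final uniform estimate. The paper first coarsens to $\bigl(\frac{m+k}{k+1-l}\bigr)^{s+s_L}$, then restricts to $k\ge m$ so that $m+k\le 2k$, and then splits the index range at $l=[k/2]$: on $l\le k/2$ it bounds the ratio by $2^{s+s_L}$ and sums the geometric series, and on $l>k/2$ it brutally bounds the ratio by $k^{s+s_L}$ and observes that $(\delta/\gamma)^{k/2}k^{s+s_L+1}\to 0$. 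Your resolution --- the elementary inequality $\frac{m+k}{m+k-l}\le l+1$, valid for $1\le l\le k$ since $m\ge 1$, followed by absolute convergence of $\sum_{l\ge 1}(l+1)^{s+s_L}(\delta/\gamma)^l$ --- collapses these two cases into a single convergent series and produces a $k$-independent bound in one step, with an explicit constant rather than an asymptotic vanishing argument. Both arguments are sound; yours is a touch cleaner and more self-contained, while the paper's split-sum technique is the one it reuses later when bounding $U_I^{-1}L_I^{-1}$ (compare the definition of $\chi$). Your closing remark about the absence of regularity gain, reflecting the fact that $L_I$ has unit diagonal whereas $U_I$ has the growing $\delta_{j+1}/\delta_j$ on its diagonal, is correct and matches the $\Omega^s\to\Omega^s$ versus $\Omega^s\to\Omega^{s+s_L}$ mapping properties stated in the two lemmas.
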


\begin{proof}
Let $y_I\in \Omega^s$ and $z_I=L_I^{-1}y_I$. Using (\ref{1i}) and the formula above \big(without the last term since we do not consider here $L_I^{-1}(y_I  -  \lambda_m\, x_{m-1} \be )$\big), we get
\begin{align*}
\label{proof:L_I_inv_1}
\left\vert z_{m+k}\right\vert &\leq  \left\vert y_{m+k}\right\vert + \sum_{l=1}^{k} \frac{\left\vert \delta_{k-l}\right\vert}{\left\vert \delta_{k}\right\vert} \, \left\vert a_{k-l+2}\right\vert\,..\,\left\vert a_{k+1}\right\vert\, \left\vert y_{m+k-l}\right\vert  \\ 
&\leq  \left\vert y_{m+k}\right\vert+\sum_{l=1}^{k} \delta^{l} \frac{\left\vert \delta_{k-l}\right\vert}{\left\vert \delta_{k}\right\vert} \, \left\vert b_{k-l+2}\right\vert\,..\,\left\vert b_{k+1}\right\vert\, \left\vert y_{m+k-l}\right\vert\\
&\leq  \left\vert y_{m+k}\right\vert+\sum_{l=1}^{k} \delta^{l} \frac{\left\vert \delta_{k-l}\right\vert\,..\,\left\vert \delta_{k-1}\right\vert}{\left\vert \delta_{k-l+1}\right\vert\,..\,\left\vert \delta_{k}\right\vert} \,\frac{\left\vert b_{k-l+1}\right\vert\left\vert b_{k-l+2}\right\vert\,..\,\left\vert b_{k+1}\right\vert}{\left\vert b_{k-l+1}\right\vert} \, \left\vert y_{m+k-l}\right\vert\\
&\leq  \left\vert y_{m+k}\right\vert+\sum_{l=1}^{k} \delta^{l} \frac{1}{u_{k-l+1}}\,..\,\frac{1}{u_{k}} \, \frac{\left\vert b_{k+1}\right\vert}{\left\vert b_{k-l+1}\right\vert} \left\vert y_{m+k-l}\right\vert,
\end{align*}
where we use the sequence $u_k$ introduced in the previous proof. We get
\begin{equation}
\label{eq:z_vs_y}
\left\vert z_{m+k}\right\vert \leq \sum_{l=0}^{k} \left(\frac{\delta}{\gamma}\right)^{l} \frac{\left\vert b_{k+1}\right\vert}{\left\vert b_{k-l+1}\right\vert} \left\vert y_{m+k-l}\right\vert,
\end{equation}
and
\begin{align*}
\left\vert z_{m+k}\right\vert\left(m+k\right)^s &\leq  \frac{C_2\left\Vert y\right\Vert_s}{C_1}\sum_{l=0}^{k} \left(\frac{\delta}{\gamma}\right)^{l} \left(\frac{k+1}{k+1-l}\right)^{s_{L}}\left(\frac{m+k}{m+k-l}\right)^{s}\\
&\leq  \frac{C_2\left\Vert y\right\Vert_s}{C_1}\sum_{l=0}^{k} \left(\frac{\delta}{\gamma}\right)^{l} \left(\frac{m+k}{k+1-l}\right)^{s+s_L}.
\end{align*}
For any $k\geq m$, we then have
\begin{align*}
\left\vert z_{m+k}\right\vert\left(m+k\right)^s &\leq  \frac{2^{s+s_L}C_2\left\Vert y\right\Vert_s}{C_1}\left(\sum_{l=0}^{\left[\frac{k}{2}\right]} \left(\frac{\delta}{\gamma}\right)^{l} \left(\frac{k}{k+1-l}\right)^{s+s_L} + \sum_{l=\left[\frac{k}{2}\right]+1}^{k} \left(\frac{\delta}{\gamma}\right)^{l} \left(\frac{k}{k+1-l}\right)^{s+s_L}\right)\\
&\leq  \frac{2^{s+s_L}C_2\left\Vert y\right\Vert_s}{C_1}\left(2^{s+s_L}\sum_{l=0}^{\left[\frac{k}{2}\right]} \left(\frac{\delta}{\gamma}\right)^{l} +  \left(\frac{\delta}{\gamma}\right)^{\frac{k}{2}}\sum_{l=\left[\frac{k}{2}\right]+1}^{k} k^{s+s_L}\right)\\
&\leq  \frac{2^{s+s_L}C_2\left\Vert y\right\Vert_s}{C_1}\left(\frac{2^{s+s_L}}{1-\frac{\delta}{\gamma}} +  \left(\frac{\delta}{\gamma}\right)^{\frac{k}{2}}\frac{k^{s+s_L+1}}{2}\right),
\end{align*}
which is bounded uniformly in $k$ since the last term goes to $0$ when $k$ goes to $\infty$, and the proof is complete.
\end{proof}

\begin{proposition} \label{prop:A_tri_domain_image}
Assume that $m\geq k_0$ and $\delta<\frac{1}{2}$. Then $A$ maps $\Omega^s$ into $\Omega^{s+s_L}$.
\end{proposition}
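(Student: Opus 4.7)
The plan is to exploit the block structure of $A$ displayed in \eqref{eq:operator_A}. Given $y\in\Omega^s$, write $y=(y_F,y_I)$ with $y_F\in\R^m$ and $y_I$ the tail, and split $Ay$ into its finite part $(Ay)_F\in\R^m$ and its infinite tail $(Ay)_I$. The idea is that both off-diagonal blocks of $A$ are rank-one, so only three genuinely infinite objects need to be controlled: the scalar $(U_I^{-1}L_I^{-1})_{r_0}\,y_I$, the product $U_I^{-1}L_I^{-1}y_I$, and the tail vector $w_I$ itself.

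For the finite part,
\[
(Ay)_F = A_m y_F - \beta_{m-1}\bigl((U_I^{-1}L_I^{-1})_{r_0}\,y_I\bigr)(A_m)_{c_{m-1}}.
\]
The scalar factor is the zeroth component of $U_I^{-1}L_I^{-1}y_I$, which lies in $\Omega^{s+s_L}$ by Lemmas~\ref{lem:U_I_inv} and~\ref{lem:L_I_inv}; in particular it is finite. Hence $(Ay)_F$ is a vector in $\R^m$, which trivially belongs to any weighted space.

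For the infinite part, grouping the two rank-one contributions involving $w_I$ gives
\[
(Ay)_I \;=\; U_I^{-1}L_I^{-1}\,y_I \;+\; \alpha(y_F,y_I)\,w_I,
\]
where $\alpha(y_F,y_I) = -\lambda_m (A_m)_{r_{m-1}} y_F + \beta_{m-1}\lambda_m (A_m)_{m-1,m-1}(U_I^{-1}L_I^{-1})_{r_0}\,y_I$ is a finite scalar by the same argument as above. The first term lies in $\Omega^{s+s_L}$ by composition of Lemmas~\ref{lem:L_I_inv} and~\ref{lem:U_I_inv}. The remaining task is therefore to show $w_I\in\Omega^{s+s_L}$.

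Since $w_I = U_I^{-1}v_I$, Lemma~\ref{lem:U_I_inv} reduces this to showing $v_I\in\Omega^s$. From the explicit formula for $v_I$, using assumption \eqref{eq:asump2_tridiag} in the form $|a_j|\le \delta|b_j|$ together with the lower bound $u_k\ge \gamma$ on the auxiliary sequence introduced in the proof of Lemma~\ref{lem:U_I_inv} (which telescopes to $|\delta_k|\ge |b_1|\gamma^{k-1}|b_2\cdots b_k|$), one obtains an estimate of the form
\[
|v_{m+k}| \;\le\; \frac{\gamma\,C_2}{C_1}\left(\frac{\delta}{\gamma}\right)^k\left(\frac{m+k}{m}\right)^{s_L}.
\]
Since $\delta/\gamma<1$, the geometric decay dominates any polynomial weight, so $v_I\in\Omega^s$ (in fact, for every $s$). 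Combining the three contributions yields $Ay\in\Omega^{s+s_L}$.

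The main obstacle is the estimate on $v_I$: one must combine the products of the subdiagonal entries $a_j$, controlled through \eqref{eq:asump2_tridiag}, with the lower bound on $|\delta_k|/(|\delta_{k-1}||b_k|)$ coming from the $u_k$-recursion to obtain telescoping that yields the geometric rate $(\delta/\gamma)^k$. Once this is in place, the three pieces of $(Ay)_F$ and $(Ay)_I$ slot together and the proposition follows.
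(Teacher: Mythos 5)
Your proof is correct and follows essentially the same block decomposition as the paper, with one small detour. You split $Ay$ into $(Ay)_F$ and $(Ay)_I$ exactly as the paper does, observe that the off-diagonal contributions are rank-one, reduce everything to three pieces, and conclude via Lemmas~\ref{lem:U_I_inv} and~\ref{lem:L_I_inv}. The only difference is how you handle $w_I$: you write $w_I=U_I^{-1}v_I$ and derive an explicit geometric bound $|v_{m+k}|\lesssim (\delta/\gamma)^k(m+k)^{s_L}$ from the closed formula for $v_I$ together with $u_k\ge\gamma$, then apply Lemma~\ref{lem:U_I_inv}. The paper instead simply observes that $\be$ is finitely supported, hence lies in $\Omega^s$ for every $s$, and that the composition $U_I^{-1}L_I^{-1}$ maps $\Omega^s$ into $\Omega^{s+s_L}$ by Lemmas~\ref{lem:U_I_inv} and~\ref{lem:L_I_inv}, so $w_I=U_I^{-1}L_I^{-1}\be\in\Omega^{s+s_L}$ with no further computation. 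Your explicit estimate on $v_I$ is a correct but redundant re-derivation of what Lemma~\ref{lem:L_I_inv} already gives when applied to $\be$; it costs a bit more work but buys nothing extra here, since only membership in $\Omega^{s+s_L}$ (not a quantitative bound) is needed for this proposition.
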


\begin{proof}
Consider $y=(y_F,y_I)^T \in \Omega^s$. Let $x=(x_F,x_I)^T=Ay$. Then, by definition of the operator $A$ in \eqref{eq:operator_A},
\begin{eqnarray*}
x_F &=&  A_m y_F  - \beta_{m-1}\, \bigg( \bigg\{ (A_m)_{c_{m-1}} \bigg\} \otimes \bigg\{ \left( U_I^{-1} {L_I}^{-1} \right)_{r_0} \bigg\} \bigg) y_I 
\\
&=& A_m y_F  - \beta_{m-1} \left\{ \left( {U_I}^{-1} {L_I}^{-1} \right)_{r_0} y_I  \right\} (A_m)_{c_{m-1}}.
\end{eqnarray*}
By the previous lemmas, ${U_I}^{-1} {L_I}^{-1} y_I \in \Omega^{s+sL}$, and in particular $\left( {U_I}^{-1} {L_I}^{-1} \right)_{r_0} y_I=\left( {U_I}^{-1} {L_I}^{-1} y_I\right)_{0}$ is well defined and so is $x_F$.

Using \eqref{eq:operator_A} again, 
\begin{equation*}
x_I = -\lambda_m \left(\bigg\{ w_I \bigg\} \otimes \, \bigg\{  (A_m)_{r_{m-1}} \bigg\}\right)\,y_F + {U_I}^{-1} {L_I}^{-1}y_I + \Lambda y_I.
\end{equation*}
Remember that $w_I = U_I^{-1} {L_I}^{-1} \be$, so that $w_I\in\Omega^s$ for any $s$. According to the previous lemmas and the definition of $\Lambda$ (see \eqref{eq:operator_A}), we see that $x_I \in \Omega^{s+s_L}$.
\end{proof}

\section{Computations of fixed points of the operator \boldmath $T$ \unboldmath}
\label{sec:rigorous_computational method}

Our main motivation for computing approximate inverses is to prove existence, in a mathematically rigorous sense, of a fixed point of the Newton-like operator $T$ in a set centered at a numerical approximation $\bx$. The Newton-like operator has the form
\begin{equation} \label{eq:T}
T(x)=x-Af(x),
\end{equation}
where $A$ is the approximate inverse \eqref{eq:operator_A} of $Df(\bx)$ computed using the theory 
of Section~\ref{sec:tridiagonal}. Since $f$ maps $\Omega^s$ into $\Omega^{s-s_L}$ and $A$ maps $\Omega^s$ into $\Omega^{s+s_L}$ 
(thanks to Proposition~\ref{prop:A_tri_domain_image}), we see that $T$ maps
 the Banach space $\Omega^s$ into itself. Our goal is to obtain explicit bounds allowing us to show that a given $T$ is a contraction on the ball $B_{\bx}(r)$,
 which yields the existence of a fixed point of $T$ (and thus of a zero of $f$). The fixed  point theorem that we use (see Theorem~\ref{thm:Tfixedpt}) requires bounds on $T$ and its derivative.
 We get formulas for these bounds in Sections~\ref{sec:Y} and \ref{sec:Z}, and then explain in Section~\ref{sec:radii_pol} how to use the 
so-called radii polynomials in order 
to find a radius $r>0$ such that $T(B_{\bx}(r)) \subset B_{\bx}(r)$,
 and such that $T$ is a contraction on $B_{\bx}(r)$.

Before proceeding further, we endow $\Omega^s$ with the operation of discrete convolution.
 More precisely, given $x=(x_k)_{k \ge 0},y=(y_k)_{k \ge 0} \in \Omega^s$, we 
extend $x,y$ symmetrically by $\tx=(x_k)_{k \in \Z}, \ty=(y_k)_{k \in \Z}$ where $\tx_{-k}=x_k$, 
$\ty_{-k}=y_k$, for $k \ge 1$. The discrete convolution of $x$ and $y$ is then denoted by $x * y$, and defined by the (infinite) sum
\[
(x*y)_k = \sum_{\stackrel{k_1+k_2 = k}{k_1,k_2 \in \Z}}  \tx_{k_1} \ty_{k_2}.
\]
It is known that for $s >1$, $\left( \Omega^s, * \right)$ is an algebra  (e.g. see \cite{MR3125637}), that is, if $x,y \in \Omega^s$, then $x*y \in \Omega^s$.
This will be useful when we shall look for a bound such as \eqref{eq:Z} below. We start with a classical theorem, whose proof 
is standard (e.g. see the proof of Lemma 3.3 in \cite{MR2718657}) and is a direct consequence of the contraction mapping theorem.

\begin{theorem}
\label{thm:Tfixedpt}
For a given $s > 1$, consider $T\colon \Omega^s\to\Omega^s$ with $T=(T_k)_{k \ge 0}$, $T_k \in \R$.  Assume that there exists a point $\bx\in\Omega^s$ and vectors $Y=\{Y_k\}_{k \ge 0}$ and $Z=\{Z_k(r)\}_{k \ge 0}$, with $Y_k, Z_k(r) \in \R$, satisfying (for all $k\ge 0$)
\begin{equation} \label{eq:Y}
|(T(\bar x)-\bar x)_k| \le Y_{k},
\end{equation}
and
\begin{equation} \label{eq:Z}
\sup_{b_1,b_2\in B_0(r)}\Big|\big[DT(\bar x+b_1)b_2\big]_{k}\Big| \le Z_{k}(r).
\end{equation}
If there exists $r>0$ such that $\| Y+Z(r)\|_s<r$, then the operator $T$ is a contraction in $B_{\bx}(r)$ and there exists a unique $\hx \in B_{\bx}(r)$ such that $T(\hx)=\hx$.
\end{theorem}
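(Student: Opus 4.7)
The plan is to verify the hypotheses of the Banach fixed point theorem on the closed ball $B_{\bx}(r)$ by combining the two pointwise bounds $Y$ and $Z(r)$ with a mean value argument, and then conclude with the standard contraction mapping theorem.

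First I would show that $T(B_{\bx}(r))\subset B_{\bx}(r)$. Given $x\in B_{\bx}(r)$, I write $T(x)-\bx = (T(x)-T(\bx))+(T(\bx)-\bx)$. The second piece is controlled componentwise by $Y_k$ via \eqref{eq:Y}. For the first piece, I apply the integral mean value theorem componentwise,
\begin{equation*}
(T(x)-T(\bx))_k = \int_0^1 \bigl[DT(\bx+t(x-\bx))(x-\bx)\bigr]_k\,dt,
\end{equation*}
noting that $b_1\bydef t(x-\bx)$ and $b_2\bydef x-\bx$ both lie in $B_0(r)$, so \eqref{eq:Z} gives $|(T(x)-T(\bx))_k|\le Z_k(r)$. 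Combining and multiplying by $\omega_k^s$,
\begin{equation*}
|(T(x)-\bx)_k|\,\omega_k^s \le (Y_k+Z_k(r))\,\omega_k^s,
\end{equation*}
so $\|T(x)-\bx\|_s\le \|Y+Z(r)\|_s<r$, which proves the self-mapping property.

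Next I would establish the contraction estimate. For $x,y\in B_{\bx}(r)$, the difference $x-y$ may have norm up to $2r$, so I cannot directly plug it into \eqref{eq:Z}. The trick is the homogeneity of $DT$ in its second argument: for any nonzero $x-y$, set $\tilde b_2\bydef \frac{r}{\|x-y\|_s}(x-y)\in B_0(r)$. Then, using again the integral mean value representation,
\begin{equation*}
(T(x)-T(y))_k = \frac{\|x-y\|_s}{r}\int_0^1 \bigl[DT(y+t(x-y))\tilde b_2\bigr]_k\,dt,
\end{equation*}
and since $y+t(x-y)=\bx+b_1$ with $b_1\in B_0(r)$, bound \eqref{eq:Z} yields $|(T(x)-T(y))_k|\le \frac{\|x-y\|_s}{r}Z_k(r)$. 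Taking the weighted supremum and using $Y_k,Z_k(r)\ge 0$ to get $\|Z(r)\|_s\le \|Y+Z(r)\|_s<r$, I obtain
\begin{equation*}
\|T(x)-T(y)\|_s \le \frac{\|Z(r)\|_s}{r}\,\|x-y\|_s = \kappa\,\|x-y\|_s,
\end{equation*}
with $\kappa\bydef \|Z(r)\|_s/r<1$. Finally, $B_{\bx}(r)$ is a closed subset of the Banach space $\Omega^s$, and the Banach fixed point theorem delivers the unique $\hx\in B_{\bx}(r)$ with $T(\hx)=\hx$.

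The only genuinely nontrivial step is the contraction estimate with the rescaling $\tilde b_2$; the rest is routine bookkeeping. A minor technical point worth a sentence is the legitimacy of differentiating under the integral sign componentwise, which is justified because $DT$ is taken along a straight segment in the convex ball $B_{\bx}(r)$ and the pointwise bound $Z_k(r)$ is independent of the integration variable, making the integrand uniformly dominated in $t$.
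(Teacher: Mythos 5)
Your proof is correct and follows the same standard contraction-mapping argument that the paper invokes by citing Lemma 3.3 of Gameiro--Lessard: self-mapping from the triangle inequality $T(x)-\bx=(T(x)-T(\bx))+(T(\bx)-\bx)$ combined with the mean value (fundamental theorem of calculus) representation and the bounds $Y$, $Z(r)$, then contraction from the rescaling $\tilde b_2$ and $\|Z(r)\|_s\le\|Y+Z(r)\|_s<r$. The only small imprecision is that the technical point at the end is not about ``differentiating under the integral sign'' but simply about the validity of the integral representation $T(x)-T(\bx)=\int_0^1 DT(\bx+t(x-\bx))(x-\bx)\,dt$ along the segment; this does not affect correctness.
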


We shall see how to get the bounds $Y$ (Section~\ref{sec:Y}) and the bounds $Z(r)$ (Section~\ref{sec:Z}), and we shall
 provide  an efficient way of finding a radius $r>0$ such that $\| Y+Z(r)\|_s<r$ (Section~\ref{sec:radii_pol}). The first step
however consists in looking for bounds on $A$. More precisely, we need some estimates in order to control the action of $U_I^{-1}L_I^{-1}$.
This is the goal of the following Subsection.

\subsection{Some preliminary computations}

We introduce the notations 
\begin{equation} \label{eq:theta_eta}
\theta \bydef \frac{\delta}{\gamma} ~~~{\rm and} ~~~ \eta \bydef  \frac{1}{\gamma(1-\theta^2)}.
\end{equation}

\begin{lemma}\label{lem:x_vs_y}
Let $y_I=\left(y_m,y_{m+1},\ldots\right)^T$ be an infinite vector and $x_I=U_I^{-1}L_I^{-1}y_I$. Assume that $m\geq k_0$ and $\delta<\frac{1}{2}$. Then, for all $k\geq 0$,  
\begin{equation*}
\left\vert x_{m+k}\right\vert \leq \eta \left(\sum_{j=0}^{k}\theta^{k-j}\frac{\left\vert y_{m+j}\right\vert}{\left\vert \mu_{m+j}\right\vert} + \sum_{j=k+1}^{\infty}\theta^{j-k}\frac{\left\vert y_{m+j}\right\vert}{\left\vert \mu_{m+j}\right\vert}\right).
\end{equation*}
\end{lemma}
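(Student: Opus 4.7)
\medskip

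The plan is to exploit the factorization $U_I^{-1}L_I^{-1}$ and reuse the pointwise estimates that were already derived (but discarded after summation) in the proofs of Lemmas~\ref{lem:U_I_inv} and \ref{lem:L_I_inv}. Setting $z_I := L_I^{-1} y_I$, so that $x_I = U_I^{-1}z_I$, I would first recover from the proof of Lemma~\ref{lem:L_I_inv}, and in particular from inequality \eqref{eq:z_vs_y}, the bound
\begin{equation*}
|z_{m+k}| \le \sum_{i=0}^{k} \theta^{k-i}\,\frac{|b_{k+1}|}{|b_{i+1}|}\,|y_{m+i}|
= \sum_{i=0}^{k} \theta^{k-i}\,\frac{|\mu_{m+k}|}{|\mu_{m+i}|}\,|y_{m+i}|,
\end{equation*}
and from the proof of Lemma~\ref{lem:U_I_inv}, in particular from inequality \eqref{eq:x_vs_z} after using $|\delta_{k+l}|/|\delta_{k+l+1}| \le 1/(\gamma |b_{k+l+1}|)$, the bound
\begin{equation*}
|x_{m+k}| \le \sum_{j=k}^{\infty} \theta^{j-k}\,\frac{1}{\gamma\,|\mu_{m+j}|}\,|z_{m+j}|,
\end{equation*}
after the change of index $j = k+l$.

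Next I would substitute the first bound into the second and swap the order of summation, which gives
\begin{equation*}
|x_{m+k}| \le \frac{1}{\gamma}\sum_{i=0}^{\infty} \frac{|y_{m+i}|}{|\mu_{m+i}|}\sum_{j=\max(k,i)}^{\infty}\theta^{(j-k)+(j-i)}.
\end{equation*}
The inner sum is a geometric series in $\theta^2$ starting at $j=\max(k,i)$, so it equals $\theta^{2\max(k,i)-k-i}/(1-\theta^2)$. Recognizing the prefactor $1/(\gamma(1-\theta^2)) = \eta$ from \eqref{eq:theta_eta} yields
\begin{equation*}
|x_{m+k}| \le \eta\,\sum_{i=0}^{\infty} \theta^{2\max(k,i)-k-i}\,\frac{|y_{m+i}|}{|\mu_{m+i}|}.
\end{equation*}

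Finally I would split the sum according to whether $i\le k$ or $i>k$: in the former case $2\max(k,i)-k-i = k-i$, while in the latter it equals $i-k$. This produces exactly the announced estimate. The only subtlety is justifying the swap of sums (both series converge absolutely under the standing assumption $\theta<1$, via exactly the same geometric comparison used in the earlier lemmas), and keeping the ratios $|\mu_{m+k}|/|\mu_{m+j}|$ straight through the substitution; no further analytical ingredient is required.
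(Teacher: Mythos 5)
Your proposal is correct and is essentially the paper's own proof, just with a slightly different parametrization of the double sum: the paper writes $\sum_{l\ge 0}\sum_{j=0}^{k+l}\theta^{k+2l-j}|y_{m+j}|/|b_{j+1}|$ and swaps $l\leftrightarrow j$, whereas you reindex both intermediate bounds first and arrive at $\sum_{j\ge k}\sum_{i=0}^{j}$ before swapping; both lead to the same geometric inner sum and the same $\eta=1/(\gamma(1-\theta^2))$ prefactor, and your closing remark about absolute convergence under $\theta<1$ is the right justification for the interchange.
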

\begin{proof} We 
again introduce $z_I=L_I^{-1}y_I$. Combining \eqref{eq:x_vs_z} from Lemma~\ref{lem:U_I_inv} and \eqref{eq:z_vs_y} from Lemma~\ref{lem:L_I_inv}, we get
\begin{align*}
\left\vert x_{m+k}\right\vert & \leq  \frac{1}{\gamma}\sum_{l=0}^{\infty}\sum_{j=0}^{k+l}\theta^{k+2l-j}\frac{\left\vert y_{m+j}\right\vert}{\left\vert b_{j+1}\right\vert}\\
& = \frac{1}{\gamma}\left(\sum_{j=0}^{k}\frac{\left\vert y_{m+j}\right\vert}{\left\vert b_{j+1}\right\vert}\sum_{l=0}^{\infty}\theta^{k+2l-j} + \sum_{j=k+1}^{\infty}\frac{\left\vert y_{m+j}\right\vert}{\left\vert b_{j+1}\right\vert}\sum_{l=j-k}^{\infty}\theta^{k+2l-j}\right)\\
& =  \frac{1}{\gamma}\left(\sum_{j=0}^{k}\frac{\left\vert y_{m+j}\right\vert}{\left\vert b_{j+1}\right\vert}\frac{\theta^{k-j}}{1-\theta^2} + \sum_{j=k+1}^{\infty}\frac{\left\vert y_{m+j}\right\vert}{\left\vert b_{j+1}\right\vert}\frac{\theta^{j-k}}{1-\theta^2}\right)\\
& = \eta \left(\sum_{j=0}^{k}\theta^{k-j}\frac{\left\vert y_{m+j}\right\vert}{\left\vert \mu_{m+j}\right\vert} + \sum_{j=k+1}^{\infty}\theta^{j-k}\frac{\left\vert y_{m+j}\right\vert}{\left\vert \mu_{m+j}\right\vert}\right). \qedhere
\end{align*}
\end{proof}
In particular, we immediately obtain  the two following corollaries (always under the assumptions of Lemma~\ref{lem:x_vs_y}) which will be useful in the sequel.
\begin{corollary}
Recall \eqref{eq:theta_eta}. Then, for $w_I=\left(w_m,w_{m+1},\ldots\right)^T \bydef U_I^{-1} L_I^{-1}\be$, we have
\begin{equation}\label{eq:maj_w_I}
\left\vert w_{m+k}\right\vert \leq \eta\theta^{k}\frac{1}{\left\vert\mu_m\right\vert},~~~~~ {\rm \it for~all~} k\geq 0.
\end{equation}
\end{corollary}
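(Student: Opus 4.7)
The proof is essentially a direct specialization of Lemma~\ref{lem:x_vs_y}. My plan is to set $y_I = \be$ in that lemma, which by the definition $w_I \bydef U_I^{-1} L_I^{-1} \be$ means $x_I = w_I$. Since $\be = (1, 0, 0, \ldots)^T$, we have $y_m = 1$ and $y_{m+j} = 0$ for all $j \geq 1$, so both sums in the bound collapse to a single surviving term.

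Concretely, the bound from Lemma~\ref{lem:x_vs_y} reads
\begin{equation*}
|w_{m+k}| \leq \eta \left( \sum_{j=0}^{k} \theta^{k-j} \frac{|y_{m+j}|}{|\mu_{m+j}|} + \sum_{j=k+1}^{\infty} \theta^{j-k} \frac{|y_{m+j}|}{|\mu_{m+j}|} \right).
\end{equation*}
The second sum vanishes identically since its index satisfies $j \geq k+1 \geq 1$, and in the first sum only the index $j=0$ contributes. This yields $|w_{m+k}| \leq \eta \theta^k / |\mu_m|$, which is precisely the claimed bound.

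There is essentially no obstacle here; the only care needed is to verify that the hypotheses of Lemma~\ref{lem:x_vs_y} (namely $m \geq k_0$ and $\delta < 1/2$) are inherited from the corollary's standing assumption ``always under the assumptions of Lemma~\ref{lem:x_vs_y}.'' So the proof is a one-line substitution, which I would present as such.
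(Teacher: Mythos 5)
Your proposal is correct and is exactly the argument the paper intends: the corollary is stated as an immediate consequence of Lemma~\ref{lem:x_vs_y}, and substituting $y_I=\be$ (so that $y_m=1$ and $y_{m+j}=0$ for $j\geq 1$) collapses both sums to the single $j=0$ term, giving $|w_{m+k}|\leq \eta\theta^k/|\mu_m|$.
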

\begin{corollary}
If $y$ is such that $y_{m+k}=0$ for any $k\geq n$, then
\begin{equation}\label{eq:maj_T_inv_F1}
\forall ~ k\leq n-2,\quad \left\vert x_{m+k}\right\vert \leq \eta \left(\sum_{l=0}^{k}\theta^{k-l}\frac{\left\vert y_{m+l}\right\vert}{\left\vert \mu_{m+l}\right\vert} +\sum_{l=k+1}^{n-1}\theta^{l-k}\frac{\left\vert y_{m+l}\right\vert}{\left\vert \mu_{m+l}\right\vert}\right)
\end{equation}
and
\begin{equation}\label{eq:maj_T_inv_F2}
\forall ~ k\geq n-1,\quad \left\vert x_{m+k}\right\vert \leq \eta \theta^k\sum_{l=0}^{n-1}\frac{\left\vert y_{m+l}\right\vert}{\theta^{l}\left\vert \mu_{m+l}\right\vert} .
\end{equation}
\end{corollary}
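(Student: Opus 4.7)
The plan is to apply Lemma~\ref{lem:x_vs_y} directly and then simplify the two sums using the hypothesis $y_{m+k}=0$ for $k\geq n$. No new analytic estimate is required; the entire argument is bookkeeping on the indices, so I expect no genuine obstacle. The only care needed is to handle the boundary case $k=n-1$ correctly when splitting into ranges.

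First, Lemma~\ref{lem:x_vs_y} gives unconditionally
\begin{equation*}
\left|x_{m+k}\right| \leq \eta \left(\sum_{j=0}^{k}\theta^{k-j}\frac{\left|y_{m+j}\right|}{\left|\mu_{m+j}\right|} + \sum_{j=k+1}^{\infty}\theta^{j-k}\frac{\left|y_{m+j}\right|}{\left|\mu_{m+j}\right|}\right).
\end{equation*}
Since $y_{m+j}=0$ whenever $j\geq n$, every term with $j\geq n$ drops out of either sum, which is the only fact I will use.

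For $k\leq n-2$, the first sum runs over $0\leq j\leq k\leq n-2$, so no terms are killed and it remains as written. In the second sum the index $j$ starts at $k+1\leq n-1$, and all contributions with $j\geq n$ vanish, so it truncates to $\sum_{j=k+1}^{n-1}$. This is exactly \eqref{eq:maj_T_inv_F1}.

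For $k\geq n-1$, the second sum starts at $j=k+1\geq n$, hence vanishes identically. In the first sum, only the terms with $j\leq n-1$ survive, giving
\begin{equation*}
\left|x_{m+k}\right| \leq \eta \sum_{j=0}^{n-1}\theta^{k-j}\frac{\left|y_{m+j}\right|}{\left|\mu_{m+j}\right|} = \eta\,\theta^{k}\sum_{j=0}^{n-1}\frac{\left|y_{m+j}\right|}{\theta^{j}\left|\mu_{m+j}\right|},
\end{equation*}
after factoring out $\theta^{k}$, which is \eqref{eq:maj_T_inv_F2}. This completes the proof.
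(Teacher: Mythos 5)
Your proof is correct and is exactly the bookkeeping the paper has in mind: the authors give no explicit proof, stating only that the corollaries follow immediately from Lemma~\ref{lem:x_vs_y} by dropping the terms with $y_{m+j}=0$ and, in the case $k\geq n-1$, factoring out $\theta^{k}$. Your handling of the index ranges, including the boundary case, is accurate.
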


More generally, we will also need in the next two Subsections a uniform bound 
on $\left\vert x_{m+k}\right\vert (m+k)^{s+s_L}$ for $k$ large enough. We assume here that $m\geq 2$ (which will always be the case in practice), and define for any integer $M$
\begin{equation*}
\chi=\chi(\theta,m,M,s,s_L) \bydef \theta^{\frac{M}{2}}\frac{M}{2}\left(\frac{m+M}{m}\right)^{s+s_L} + \theta^{\sqrt M}\frac{M}{2}2^{s+s_L}+ \frac{1}{1-\theta}\left(\frac{m+M}{m+M-\sqrt M-1}\right)^{s+s_L}.
\end{equation*}
\begin{proposition}
Suppose that $M$ satisfies
\begin{equation}\label{eq:M_A}
M\geq \max\left(\frac{-m\ln\sqrt \theta -s-s_L-1-\sqrt{(m\ln\sqrt\theta+s+s_L+1)^2-4m\ln\sqrt\theta}}{2\ln \sqrt \theta},\frac{4}{\left(\ln\theta\right)^2},m\right).
\end{equation}
Then for all $k< M$,
\begin{equation}\label{eq:maj_unif_expl}
\left\vert x_{m+k}\right\vert\left(m+k\right)^{s+s_L} \leq \frac{\eta \Vert y_I \Vert_s}{C_1} \left(\sum_{l=0}^{k}\theta^{k-l}\left(\frac{m+k}{m+l}\right)^{s+s_L}  + \frac{\theta}{1-\theta}\right),
\end{equation}
and for all $k\geq M$
\begin{equation}\label{eq:maj_unif_tail}
\left\vert x_{m+k}\right\vert\left(m+k\right)^{s+s_L} \leq \frac{\eta \Vert y_I \Vert_s}{C_1} \left(\chi  + \frac{\theta}{1-\theta}\right).
\end{equation}
\end{proposition}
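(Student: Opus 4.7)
The starting point is Lemma~\ref{lem:x_vs_y}. Since $m \geq k_0$, assumption \eqref{eq:asump1_tridiag} gives $|\mu_{m+j}| \geq C_1(m+j)^{s_L}$, and the definition of the weighted norm gives $|y_{m+j}| \leq \|y_I\|_s/(m+j)^s$. Plugging these bounds into the estimate of the lemma and multiplying by $(m+k)^{s+s_L}$, I obtain
\begin{equation*}
|x_{m+k}|(m+k)^{s+s_L} \leq \frac{\eta\|y_I\|_s}{C_1}\left(\sum_{j=0}^{k}\theta^{k-j}\left(\frac{m+k}{m+j}\right)^{s+s_L} + \sum_{j=k+1}^{\infty}\theta^{j-k}\left(\frac{m+k}{m+j}\right)^{s+s_L}\right).
\end{equation*}
For the second sum, $j>k$ implies $(m+k)/(m+j) \leq 1$, so it is bounded by $\sum_{l\geq 1}\theta^l = \theta/(1-\theta)$. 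This already proves \eqref{eq:maj_unif_expl} for every $k \geq 0$, and in particular for $k<M$.

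For $k \geq M$, the task reduces to bounding the first sum $\Sigma_k \bydef \sum_{j=0}^{k}\theta^{k-j}((m+k)/(m+j))^{s+s_L}$ uniformly by $\chi$. My plan is a three-range split at $j=\lfloor k/2\rfloor$ and $j=k-\lfloor\sqrt k\rfloor$, refining the two-range split of Lemma~\ref{lem:L_I_inv}. On $0\leq j\leq\lfloor k/2\rfloor$, I bound $\theta^{k-j}\leq\theta^{k/2}$ and $(m+k)/(m+j)\leq(m+k)/m$, which gives a contribution of at most $(k/2)\theta^{k/2}((m+k)/m)^{s+s_L}$. On the middle range $\lfloor k/2\rfloor<j\leq k-\lfloor\sqrt k\rfloor-1$, I bound $\theta^{k-j}\leq\theta^{\sqrt k}$ and $(m+k)/(m+j)\leq 2$ (this elementary bound holds whenever $j\geq k/2$, independently of $m$), giving $(k/2)\theta^{\sqrt k}\,2^{s+s_L}$. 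On the tail $k-\lfloor\sqrt k\rfloor\leq j\leq k$, I sum the geometric factors $\theta^{k-j}$ to $1/(1-\theta)$ and factor out $(m+k)/(m+k-\sqrt k-1)$, obtaining $(1-\theta)^{-1}((m+k)/(m+k-\sqrt k-1))^{s+s_L}$, where the $-1$ absorbs the floor-function slack.

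To turn these into $M$-uniform constants, I will verify that each of the three pieces is nonincreasing in $k$ on $[M,\infty)$. Taking the logarithmic derivative of the first piece $k\mapsto(k/2)\theta^{k/2}((m+k)/m)^{s+s_L}$ leads to the condition $1/k+(s+s_L)/(m+k)\leq -\ln\theta/2$. Multiplying by $k(m+k)$ yields a quadratic in $k$ whose largest root is precisely the first argument of the $\max$ in \eqref{eq:M_A}. The second piece $k\mapsto(k/2)\theta^{\sqrt k}$ is handled identically and produces the threshold $k\geq 4/(\ln\theta)^2$. For the third piece, $\frac{d}{dk}(\sqrt k/(m+k))=(m-k)/(2\sqrt k(m+k)^2)\leq 0$ for $k\geq m$, so $(m+k)/(m+k-\sqrt k-1)$ is nonincreasing on $[m,\infty)$, which matches the third constraint $M\geq m$. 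Under the three constraints of \eqref{eq:M_A} simultaneously, each piece is bounded by its value at $k=M$, and summing these three values yields exactly $\chi$, from which \eqref{eq:maj_unif_tail} follows.

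The main technical obstacle is checking that the quadratic coming out of the monotonicity analysis of the first piece has its largest root equal to the (somewhat involved) first expression inside the $\max$ in \eqref{eq:M_A}; the other two monotonicity statements are one-line calculus computations, and the three-range splitting itself is mechanical. A secondary annoyance is keeping track of floor/ceiling arithmetic when indexing the three ranges, but this only affects constants that are absorbed in the $-1$ appearing in the denominator $m+M-\sqrt M -1$ of the definition of $\chi$.
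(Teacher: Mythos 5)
Your proof matches the paper's almost exactly: the starting inequality from Lemma~\ref{lem:x_vs_y}, the geometric bound $\theta/(1-\theta)$ on the second sum, the three-range split of the first sum at $\lfloor k/2\rfloor$ and $k-\lfloor\sqrt k\rfloor$, and the three monotonicity arguments (quadratic in $k$, the threshold $4/(\ln\theta)^2$, and $k\ge m$). The paper isolates the three monotonicity statements as separate lemmas with $\varphi_1$, $\varphi_2$, $\varphi_3$, but the calculations are the ones you describe; the only small slip is that for the third piece the relevant function is $(\sqrt k+1)/(m+k)$ rather than $\sqrt k/(m+k)$, though since $1/(m+k)$ is also nonincreasing this does not affect the conclusion.
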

\begin{proof}
Thanks to Lemma~\ref{lem:x_vs_y},
\begin{align*}
\left\vert x_{m+k}\right\vert\left(m+k\right)^{s+s_L} & \leq  \frac{\eta \Vert y_I \Vert_s}{C_1} \left( \sum_{l=0}^{k}\theta^{k-l}\left(\frac{m+k}{m+l}\right)^{s+s_L} + \sum_{l=k+1}^{\infty}\theta^{l-k}\left(\frac{m+k}{m+l}\right)^{s+s_L}\right)\\
& \leq  \frac{\eta \Vert y_I \Vert_s}{C_1} \left(\sum_{l=0}^{k}\theta^{k-l}\left(\frac{m+k}{m+l}\right)^{s+s_L}  + \frac{\theta}{1-\theta}\right).
\end{align*} 
Then for $k\geq M$, we split the remaining sum
\begin{align*}
\sum_{l=0}^{k}\theta^{k-l}\left(\frac{m+k}{m+l}\right)^{s+s_L}  &= \sum_{l=0}^{\left[\frac{k}{2}\right]-1}\theta^{k-l}\left(\frac{m+k}{m+l}\right)^{s+s_L} + \sum_{l=\left[\frac{k}{2}\right]}^{k-\left[\sqrt k\right]-1}\theta^{k-l}\left(\frac{m+k}{m+l}\right)^{s+s_L}+ \sum_{l=k-\left[\sqrt k\right]}^{k}\theta^{k-l}\left(\frac{m+k}{m+l}\right)^{s+s_L} \\
&\leq \theta^{\frac{k}{2}}\frac{k}{2}\left(\frac{m+k}{m}\right)^{s+s_L} + \theta^{\sqrt k}\frac{k}{2}2^{s+s_L} + \frac{1}{1-\theta}\left(\frac{m+k}{m+k-\sqrt k -1}\right)^{s+s_L} \\
&\leq \theta^{\frac{M}{2}}\frac{M}{2}\left(\frac{m+M}{m}\right)^{s+s_L} + \theta^{\sqrt M}\frac{M}{2}2^{s+s_L} + \frac{1}{1-\theta}\left(\frac{m+M}{m+M-\sqrt M-1}\right)^{s+s_L}\\
& = \chi.
\end{align*}
The justification of the last inequality is contained in the following three lemmas.
\end{proof}
\begin{lemma}
If $M$ satisfies \eqref{eq:M_A}, then for all $k\geq M$
\begin{equation*}
\theta^{\frac{k}{2}}\frac{k}{2}\left(\frac{m+k}{m}\right)^{s+s_L} \leq \theta^{\frac{M}{2}}\frac{M}{2}\left(\frac{m+M}{m}\right)^{s+s_L}.
\end{equation*}
\end{lemma}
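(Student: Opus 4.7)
The plan is to reduce the discrete inequality to monotonicity of a continuous function. Define
$$g(t) \bydef \theta^{t/2}\,\frac{t}{2}\,\left(\frac{m+t}{m}\right)^{s+s_L}, \qquad t > 0,$$
so that the claim for integer $k \ge M$ will follow at once from showing that $g$ is non-increasing on $[M,\infty)$. I would work with $h \bydef \ln g$, which is smooth on $(0,\infty)$, and compute
$$h'(t) = \frac{\ln\theta}{2} + \frac{1}{t} + \frac{s+s_L}{m+t}.$$

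Next, I would multiply through by the positive quantity $2t(m+t)$ to turn $h'(t)\le 0$ into a polynomial inequality. A short rearrangement yields the equivalent form
$$P(t) \bydef \ln\sqrt\theta\cdot t^2 \;+\; \bigl(m\ln\sqrt\theta + s + s_L + 1\bigr)\,t \;+\; m \;\le\; 0.$$
Since $0 < \theta < 1$, the leading coefficient $\ln\sqrt\theta$ is strictly negative, so $P$ is a downward-opening parabola. Its discriminant $(m\ln\sqrt\theta + s+s_L+1)^2 - 4m\ln\sqrt\theta$ is automatically positive (the second term is positive because $\ln\sqrt\theta < 0$), so $P$ has two real roots and $P(t)\le 0$ holds precisely for $t$ outside the interval bounded by them; in particular for every $t$ at least as large as the greater root $t_+$.

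Finally, the quadratic formula, remembering that division by the negative number $2\ln\sqrt\theta$ swaps the order of the two roots, gives
$$t_+ = \frac{-m\ln\sqrt\theta - s - s_L - 1 - \sqrt{(m\ln\sqrt\theta + s + s_L + 1)^2 - 4m\ln\sqrt\theta}}{2\ln\sqrt\theta},$$
which is exactly the first entry of the maximum appearing in \eqref{eq:M_A}. Hence the hypothesis $M \ge t_+$ ensures $h'(t)\le 0$, and therefore $g'(t)\le 0$, for all $t\ge M$; thus $g$ is non-increasing on $[M,\infty)$, and specializing to $t=k\ge M$ versus $t=M$ finishes the proof. The only subtle point is the sign bookkeeping when identifying the \emph{larger} root of a downward-opening parabola after dividing by a negative leading coefficient; beyond that, everything is elementary calculus.
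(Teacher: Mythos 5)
Your argument is correct and essentially the same as the paper's: both reduce the claim to showing that the quadratic $P(t)=\ln\sqrt\theta\cdot t^2+(m\ln\sqrt\theta+s+s_L+1)t+m$ is nonpositive for $t\ge M$, and identify the first entry of \eqref{eq:M_A} as its larger root. The only difference is that you reach $P$ via logarithmic differentiation while the paper differentiates $\varphi_1(x)=\theta^{x/2}x(m+x)^{s+s_L}$ with the product rule and factors out $\theta^{x/2}(m+x)^{s+s_L-1}$; this is a cosmetic distinction.
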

\begin{proof}
For $x>0$, let $\displaystyle \varphi_1(x) \bydef \theta^{\frac{x}{2}}x(m+x)^{s+s_L}$, whose derivative is
\begin{align*}
\varphi_1'(x) &= \sqrt{\theta}^x\left(\left(\ln\sqrt\theta\right) x(m+x)^{s+s_L}+\left(m+x\right)^{s+s_L}+(s+s_L)x\left(m+x\right)^{s+s_L-1}\right)\\
&= \left(m+x\right)^{s+s_L-1}\sqrt{\theta}^x\left(\left(\ln\sqrt\theta\right)(m+x)x +(m+x)+(s+s_L)x\right)\\
&= \left(m+x\right)^{s+s_L-1}\sqrt{\theta}^x\left(\left(\ln\sqrt\theta\right) x^2 + \left(m\ln\sqrt\theta+s+s_L+1\right)x + m\right).
\end{align*}
For $0<\theta<1$, the discriminant of $\ln\sqrt\theta x^2 + \left(m\ln\sqrt\theta+s+s_L+1\right)x + m$ given by
\begin{equation*}
\Delta \bydef \left(m\ln\sqrt\theta +s+s_L+1\right)^2-4m\ln\sqrt\theta,
\end{equation*}
is positive.  Since $M$ satisfies (\ref{eq:M_A}), $\varphi_1'(x)\leq 0$ for any $x\geq M$ and so $\varphi_1(k)\leq\varphi_1(M)$ for all $k\geq M$.
\end{proof}
\begin{lemma}
If $M$ satisfies \eqref{eq:M_A}, then for all $k\geq M$,
\begin{equation*}
\theta^{\sqrt k}\frac{k}{2}2^{s+s_L} \leq \theta^{\sqrt M}\frac{M}{2}2^{s+s_L}.
\end{equation*}
\end{lemma}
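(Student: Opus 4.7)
The plan is to introduce the real-variable analogue of the left-hand side, namely
$\displaystyle \varphi_2(x) \bydef \theta^{\sqrt x}\,\frac{x}{2}\,2^{s+s_L}$ for $x>0$, and show that $\varphi_2$ is non-increasing for $x\geq M$. Since the constant $\frac{1}{2}\,2^{s+s_L}$ plays no role in the monotonicity, it suffices to study $\psi(x) \bydef x\,\theta^{\sqrt x} = x\,e^{\sqrt x\,\ln\theta}$ on $(0,\infty)$.

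A direct differentiation gives
\begin{equation*}
\psi'(x) = \theta^{\sqrt x}\left(1 + \frac{\sqrt x\,\ln\theta}{2}\right).
\end{equation*}
Because $0<\theta<1$, we have $\ln\theta<0$, so the sign of $\psi'(x)$ is the sign of $1 + \frac{\sqrt x\,\ln\theta}{2}$. This quantity is non-positive precisely when $\sqrt x \geq \frac{2}{-\ln\theta}$, i.e.\ when
\begin{equation*}
x \geq \frac{4}{(\ln\theta)^2}.
\end{equation*}

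Assumption \eqref{eq:M_A} forces $M\geq \frac{4}{(\ln\theta)^2}$, so for every $x\geq M$ we have $\psi'(x)\leq 0$. Consequently $\psi$, and hence $\varphi_2$, is non-increasing on $[M,\infty)$, and evaluating at the integer $k\geq M$ gives $\varphi_2(k)\leq \varphi_2(M)$, which is exactly the claimed inequality. The only delicate point is recognising that the precise lower bound $\frac{4}{(\ln\theta)^2}$ on $M$ appearing in \eqref{eq:M_A} is tailored to make $\psi'\leq 0$ from $M$ onward; no further calculation is needed.
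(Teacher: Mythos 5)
Your proof is correct and is essentially identical to the paper's: both differentiate $x\mapsto x\,\theta^{\sqrt x}$, obtain $\theta^{\sqrt x}\bigl(1+\tfrac{\sqrt x\,\ln\theta}{2}\bigr)$, and conclude non-positivity of the derivative for $x\geq 4/(\ln\theta)^2$, which is guaranteed by \eqref{eq:M_A}.
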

\begin{proof}
Let $\varphi_2(x) \bydef \theta^{\sqrt x}x$. Then 
\[
\varphi_2'(x) = \theta^{\sqrt x}\left(\frac{\ln\theta}{2\sqrt x}x+1\right) = \frac{\theta^{\sqrt x}}{2}\left(\sqrt x\ln\theta+2\right).
\]
Hence, for $x\geq \displaystyle \frac{4}{(\ln\theta)^2}$, $\varphi_2'(x)\leq 0$ and so $\varphi_2(k)\leq\varphi_2(M)$ for all $k\geq M$.
\end{proof}
\begin{lemma}
If $M$ satisfies \eqref{eq:M_A}, then for all $k\geq M$,
\begin{equation*}
\frac{1}{1-\theta}\left(\frac{m+k}{m+k-\sqrt k -1}\right)^{s+s_L} \leq \frac{1}{1-\theta}\left(\frac{m+M}{m+M-\sqrt M-1}\right)^{s+s_L}.
\end{equation*}
\end{lemma}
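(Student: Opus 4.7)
The plan is to reduce the claim to the monotonicity of an elementary function on $[M,\infty)$. Since the multiplicative constant $\frac{1}{1-\theta}>0$ and the exponent $s+s_L>0$ both preserve the direction of inequalities between positive quantities, it suffices to show that
\begin{equation*}
\varphi_3(x) \bydef \frac{m+x}{m+x-\sqrt{x}-1}
\end{equation*}
is non-increasing on $[M,\infty)$. I would first check that the denominator is positive on this range: condition \eqref{eq:M_A} forces $M\geq m\geq 2$, so for $x\geq M$ one has $(x-\sqrt{x})+(m-1)\geq 0+1>0$, making $\varphi_3$ smooth and positive there.

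Next I would apply the quotient rule to $\varphi_3$ and simplify. A short algebraic calculation, following the same template as the computations of $\varphi_1'$ and $\varphi_2'$ above, should yield
\begin{equation*}
\varphi_3'(x) = \frac{m - x - 2\sqrt{x}}{2\sqrt{x}\,(m+x-\sqrt{x}-1)^2},
\end{equation*}
so that $\varphi_3'(x)\leq 0$ precisely when $x+2\sqrt{x}\geq m$, equivalently $(\sqrt{x}+1)^2\geq m+1$. Because $(\sqrt{m}+1)^2 = m+2\sqrt{m}+1\geq m+1$ for all $m\geq 0$, this condition is implied by the simpler inequality $x\geq m$.

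Finally, I would invoke $M\geq m$, which is part of \eqref{eq:M_A}. This yields $\varphi_3'\leq 0$ on $[M,\infty)$, hence $\varphi_3(k)\leq \varphi_3(M)$ for all $k\geq M$, and the desired inequality follows after raising both sides to the $(s+s_L)$-th power and restoring the factor $\frac{1}{1-\theta}$. The only mildly delicate point I anticipate is the algebraic simplification of $\varphi_3'$ (specifically, collapsing $\frac{m+x-\sqrt x -1}{1}-(m+x)\bigl(1-\frac{1}{2\sqrt x}\bigr)$ to the neat form above); beyond that the structure parallels the two preceding lemmas exactly.
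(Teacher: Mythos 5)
Your proof is correct and follows essentially the same route as the paper's: you define the same auxiliary function $\varphi_3$, apply the quotient rule to obtain the same expression $\varphi_3'(x)=\dfrac{m-x-2\sqrt{x}}{2\sqrt{x}\,(m+x-\sqrt{x}-1)^2}$, and conclude monotonicity from $M\geq m$ (a condition guaranteed by \eqref{eq:M_A}). The extra checks you include (positivity of the denominator, the $(\sqrt{x}+1)^2$ rewriting) are harmless but unnecessary, since $x\geq m$ already gives $x+2\sqrt{x}\geq x\geq m$ directly, as in the paper.
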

\begin{proof}
Let $\displaystyle \varphi_3(x) \bydef \frac{m+x}{m+x-\sqrt x -1}$. Then
\[
\varphi_3'(x) = \frac{m+x-\sqrt x -1 -(m+x)\left(1-\frac{1}{2\sqrt x}\right)}{\left(m+x-\sqrt x -1\right)^2}
= -\frac{x+2\sqrt x-m}{2\sqrt x\left(m+x-\sqrt x -1\right)^2}.
\]
Hence, for $x\geq m$, $\varphi_3'(x)\leq 0$ and $\varphi_3(k)\leq\varphi_3(M)$ for all $k\geq M$.
\end{proof}

Finally, we will need to bound the error made by using $\tilde w$ instead of $\left(w_I\right)_0$ for the definition (\ref{eq:operator_A}) of $A$. 
\begin{lemma}
Assume that $L\geq k_0$ and $\delta<\frac{1}{2}$. Then
\begin{equation}
\label{eq:err_tilde_w}
\left\vert \left(w_I\right)_0 - \tilde w\right\vert \leq  \frac{\theta^{2L}}{\left\vert \mu_m\right\vert(1-\theta^2)}.
\end{equation}
\end{lemma}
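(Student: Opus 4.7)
The strategy is to expand the difference as a tail of the explicit series defining $(w_I)_0$ given just above \eqref{eq:tilde_w}, bound each term uniformly by a geometric factor, and then sum the resulting geometric series. Concretely, by definition
\[
(w_I)_0 - \tilde w \;=\; \sum_{l=L}^{\infty}\frac{\delta_0^2}{\delta_l\,\delta_{l+1}}\,c_1\cdots c_l\,a_2\cdots a_{l+1},
\]
so the whole task reduces to an absolute bound on $|T_l|\bydef |\delta_0^2\,c_1\cdots c_l\,a_2\cdots a_{l+1}/(\delta_l\,\delta_{l+1})|$.

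For the numerator, I would use assumption~\eqref{eq:asump2_tridiag} (which applies for every index since $m\ge k_0$ in the setting of the preceding lemmas) in the Ciarlet notation to get $|c_j|\le\delta|b_j|$ and $|a_j|\le\delta|b_j|$ for all relevant $j$, hence
\[
|c_1\cdots c_l\,a_2\cdots a_{l+1}| \;\le\; \delta^{2l}\,|b_1|\,|b_2|^2\cdots|b_l|^2\,|b_{l+1}|.
\]
For the denominator, the key input is the estimate $u_k\ge\gamma$ established in the proof of Lemma~\ref{lem:U_I_inv}, where $u_k=|\delta_k|/(|b_k||\delta_{k-1}|)$. Iterating the defining inequality from $k=1$ (with $\delta_0=1$ and $u_1=1$) yields $|\delta_k|\ge\gamma^{k-1}\,|b_1\,b_2\cdots b_k|$ for every $k\ge 1$, and therefore
\[
|\delta_l\,\delta_{l+1}| \;\ge\; \gamma^{2l-1}\,|b_1|^2\,|b_2|^2\cdots|b_l|^2\,|b_{l+1}|.
\]

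Dividing the two displays and recalling $\theta=\delta/\gamma$ and $b_1=\mu_m$ gives
\[
|T_l| \;\le\; \frac{\delta^{2l}}{\gamma^{2l-1}\,|\mu_m|} \;=\; \frac{\gamma\,\theta^{2l}}{|\mu_m|} \;\le\; \frac{\theta^{2l}}{|\mu_m|},
\]
where the last inequality uses $\gamma\le 1$. Summing the geometric series $\sum_{l\ge L}\theta^{2l}=\theta^{2L}/(1-\theta^{2})$ (valid since $\theta<1$) and applying the triangle inequality to the tail sum delivers \eqref{eq:err_tilde_w}.

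There is no real obstacle here beyond careful bookkeeping of the index offsets $b_1\cdots b_l$ vs.\ $b_2\cdots b_{l+1}$ in the numerator and of the exponents of $\gamma$ that arise from iterating $u_k\ge\gamma$; the factor $|\mu_m|=|b_1|$ that survives in the denominator is exactly what accounts for the weight on the right-hand side of the claimed bound, and the $\gamma\le 1$ slack is what lets one absorb the prefactor $\gamma$ into the clean statement of the lemma.
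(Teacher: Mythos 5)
Your proof is correct and follows essentially the same route as the paper: expand the difference as the tail $\sum_{l\ge L}$ of the series for $(w_I)_0$, control numerator via $|a_j|,|c_j|\le\delta|b_j|$ from \eqref{eq:asump2_tridiag}, control denominator via the $u_k\ge\gamma$ estimate from the proof of Lemma~\ref{lem:U_I_inv}, and sum the geometric series. The only cosmetic difference is that you phrase the denominator control as a closed-form lower bound $|\delta_k|\ge\gamma^{k-1}|b_1\cdots b_k|$ (giving the slightly sharper prefactor $\gamma\theta^{2l}$, then relaxed by $\gamma\le 1$), whereas the paper keeps the products $\prod 1/u_j$ explicit and bounds all $2l$ factors by $\gamma^{-1}$ directly to reach $\theta^{2l}$.
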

\begin{proof}
Using (\ref{eq:asump2_tridiag}) together with the sequence $(u_l)$ introduced in the proof of Lemma~\ref{lem:U_I_inv}, we get
\begin{align*}
\left\vert \left(w_I\right)_0 - \tilde w\right\vert & \leq \sum_{l=L}^{\infty}\frac{\left\vert \delta_0\right\vert^2}{\left\vert \delta_l\right\vert\left\vert \delta_{l+1}\right\vert}\left\vert c_1\right\vert\ldots \left\vert c_l\right\vert\, \left\vert a_2\right\vert\ldots \left\vert a_{l+1}\right\vert \\
& \leq \frac{\left\vert \delta_0\right\vert}{\left\vert \delta_1\right\vert}\sum_{l=L}^{\infty}\delta^{2l}\left(\frac{1}{u_1}\cdots\frac{1}{u_l}\right)\left(\frac{1}{u_2}\cdots\frac{1}{u_{l+1}}\right) \\
& \leq \frac{1}{\left\vert \mu_m\right\vert}\sum_{l=L}^{\infty}\theta^{2l} \\
& = \frac{\theta^{2L}}{\left\vert \mu_m\right\vert(1-\theta^2)}. \qedhere
\end{align*}
\end{proof}

\subsection{Computation of the  \boldmath $Y$ \unboldmath bounds}
\label{sec:Y}

From now on, we shall assume for the sake of clarity that the nonlinearity $N$ of $f$ in \eqref{eq:general_f=0} is a polynomial of degree two.
The generalization to a polynomial nonlinearity of higher degree could be obtained thanks
to the use of the estimates developed in \cite{MR2718657} in order to bound terms like
\begin{equation*}
\left(x^1\ast\ldots\ast x^p\right)_n
\end{equation*}
where $x^1,\ldots,x^p\in B_{0}(r)$. Moreover, as long as one is interested in 
problems with nonlinearities built from elementary functions of mathematical
physics (powers, exponential, trigonometric functions, rational, Bessel, elliptic integrals, etc.), 
our method is applicable.
Indeed, since these nonlinearities are themselves solutions of low order 
linear or polynomial ODEs, they can be appended to the original problem of
interest in order to obtain  polynomial nonlinearities, albeit in a higher number of 
variables. This standard trick is explained in more details in \cite{MR633878}, and is used in \cite{LMR} to prove existence of periodic solutions 
in the planar circular restricted three body problem.

With this in mind, we are ready to compute the bound $Y$ appearing in
 Theorem~\ref{thm:Tfixedpt}. In everything that follows, $\left\vert \cdot \right\vert$, when applied to vectors or matrices
 (even infinite dimensional), must be understood component-wise.
\medskip

The main estimate of this subsection, that is the bound on $Y$, is presented in the following Proposition:
\medskip

\begin{proposition}\label{prop:Y}
Consider an integer $M$ such that
\begin{equation}\label{eq:M_Y}
M\geq \max\left(\frac{-s}{\ln\theta}-m,m-2\right),
\end{equation}
and define $Y=(Y_k)_{k \ge 0}$ component-wise by
\begin{equation}
\label{eq:Y_F}
Y_F \bydef \left\vert A_m\left(f(\bx)\right)_F\right\vert + \left\vert \beta_{m-1}\right\vert\eta\left(\sum_{l=0}^{m-2}\theta^l\frac{\left\vert f(\bx)\right\vert_{m+l}}{\left\vert \mu_{m+l}\right\vert}\right) \left\vert\left( A_m\right)_{c_{m-1}}\right\vert,
\end{equation}
\begin{align}
\label{eq:Y_k_1}
Y_{m+k} &\bydef \left(\left\vert\left(A_m\right)_{r_{m-1}} f(\bx)_F\right\vert + \left\vert \beta_{m-1} \left(A_m\right)_{m-1,m-1}\right\vert \eta\left(\sum_{l=0}^{m-2}\theta^l\frac{\left\vert f(\bx)\right\vert_{m+l}}{\left\vert \mu_{m+l}\right\vert}\right)\right)\eta\theta^{k}\frac{\left\vert \lambda_m\right\vert}{\left\vert\mu_m\right\vert} \nonumber\\
&+ \eta\sum_{l=0}^{k}\theta^{k-l}\frac{\left\vert f(\bx)\right\vert_{m+l}}{\left\vert \mu_{m+l}\right\vert} + \eta\sum_{l=k+1}^{m-2}\theta^{l-k}\frac{\left\vert f(\bx)\right\vert_{m+l}}{\left\vert \mu_{m+l}\right\vert},\quad \forall ~0\leq k\leq m-3, 
\end{align}
\begin{align}
\label{eq:Y_k_2}
Y_{m+k} &\bydef \left(\left\vert\left(A_m\right)_{r_{m-1}} f(\bx)_F\right\vert + \left\vert \beta_{m-1} \left(A_m\right)_{m-1,m-1}\right\vert \eta\left(\sum_{l=0}^{m-2}\theta^l\frac{\left\vert f(\bx)\right\vert_{m+l}}{\left\vert \mu_{m+l}\right\vert}\right)\right)\eta\theta^{k}\frac{\left\vert \lambda_m\right\vert}{\left\vert\mu_m\right\vert} \nonumber \\
&+ \eta \theta^k\sum_{l=0}^{m-2}\frac{\left\vert f(\bx)\right\vert_{m+l}}{\theta^{l}\left\vert \mu_{m+l}\right\vert},\quad \forall m-2\leq k\leq M, 
\end{align}
and
\begin{equation}
 \label{eq:Y_tail}
Y_{m+k}\bydef Y_{m+M}\frac{\omega_{m+M}^s}{\omega_{m+k}^s},\quad \forall~ k>M.
\end{equation}
Then
\begin{equation*}
\vert T(\bx) - \bx \vert \leq Y.
\end{equation*}
\end{proposition}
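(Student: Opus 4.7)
The plan is to expand $T(\bar x) - \bar x = -A f(\bar x)$ using the block decomposition of $A$ in \eqref{eq:operator_A} and then to control each block with the estimates established just above. One preliminary observation would be used throughout: since $\bar x$ is identified with a vector of length $m$, since $\cL$ is tridiagonal, and since $N$ is a polynomial of degree two built from discrete convolutions, the vector $f(\bar x)$ has support in $\{0,\ldots,2m-2\}$. In particular, $(f(\bar x))_{m+l}=0$ for every $l\geq m-1$, which is why the sums appearing in \eqref{eq:Y_F}--\eqref{eq:Y_k_2} all terminate at $l=m-2$.

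First, I would handle the finite part. Writing $y \bydef f(\bar x)$ and reading off the first block row of $A$ gives
\[
(T(\bar x)-\bar x)_F = -A_m\, y_F + \beta_{m-1}\bigl(U_I^{-1}L_I^{-1}y_I\bigr)_0\, (A_m)_{c_{m-1}}.
\]
Applying Lemma~\ref{lem:x_vs_y} with $k=0$ and using the finite support of $y_I$ yields $|(U_I^{-1}L_I^{-1}y_I)_0| \leq \eta \sum_{l=0}^{m-2}\theta^l |y_{m+l}|/|\mu_{m+l}|$. Taking absolute values componentwise then reproduces \eqref{eq:Y_F}.

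Next, I would read off the second block row of $A$:
\[
(T(\bar x)-\bar x)_{m+k} = \lambda_m w_{m+k} (A_m)_{r_{m-1}} y_F - \bigl(U_I^{-1}L_I^{-1} y_I\bigr)_k - \beta_{m-1}\lambda_m (A_m)_{m-1,m-1} w_{m+k} \bigl(U_I^{-1}L_I^{-1} y_I\bigr)_0.
\]
The two terms containing $w_{m+k}$ would be controlled by \eqref{eq:maj_w_I}, namely $|w_{m+k}| \leq \eta\theta^k/|\mu_m|$, together with the $k=0$ estimate already used for $Y_F$ to bound the inner factor of the last term. For the middle term I would invoke the two corollaries of Lemma~\ref{lem:x_vs_y}: inequality \eqref{eq:maj_T_inv_F1} (with $n=m-1$) handles the range $0\leq k\leq m-3$, and inequality \eqref{eq:maj_T_inv_F2} handles $k\geq m-2$. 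Summing the three contributions reproduces \eqref{eq:Y_k_1} for $0\leq k\leq m-3$ and \eqref{eq:Y_k_2} for $m-2\leq k\leq M$.

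The main obstacle is the justification of \eqref{eq:Y_tail} for $k>M$. For $k\geq m-2$, every piece of the bound derived above has the form $C\theta^k$ with $C$ a constant depending on $m$ but independent of $k$, so in the weighted norm $|(T(\bar x)-\bar x)_{m+k}|\,\omega_{m+k}^s \leq C\theta^k (m+k)^s$. A short derivative computation shows that $x \mapsto \theta^x (m+x)^s$ is nonincreasing as soon as $m+x \geq -s/\ln\theta$, which by the choice of $M$ in \eqref{eq:M_Y} holds for every $x \geq M$. Consequently $|(T(\bar x)-\bar x)_{m+k}|\,\omega_{m+k}^s \leq Y_{m+M}\,\omega_{m+M}^s$ for all $k>M$, and dividing through by $\omega_{m+k}^s$ gives exactly \eqref{eq:Y_tail}, completing the componentwise bound $|T(\bar x)-\bar x|\leq Y$.
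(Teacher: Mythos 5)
Your proof is correct and follows essentially the same approach as the paper: note $T(\bar x)-\bar x=-Af(\bar x)$ with $f(\bar x)$ vanishing past index $2m-2$, expand blockwise via \eqref{eq:operator_A}, and invoke \eqref{eq:maj_w_I}, \eqref{eq:maj_T_inv_F1}, \eqref{eq:maj_T_inv_F2}. You were slightly more explicit than the paper in spelling out the monotonicity of $x\mapsto\theta^x(m+x)^s$ behind the tail bound \eqref{eq:Y_tail}, but the reasoning is identical.
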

\begin{proof}
By definition of $T$,
\begin{equation*}
\left\vert T(\bx)-\bx\right\vert = \left\vert Af(\bx)\right\vert.
\end{equation*}
Note that since we suppose that $f$ is at most quadratic,
 and since $\bx$ is constructed in such a way that $\bx_k=0$ for all $k\geq m$, we get the identity
 $\left(f(\bx)\right)_{m+k}=0$ for all $k\geq m-1$.
Thanks to (\ref{eq:operator_A}),
\begin{equation*}
\left\vert \left(Af(\bx)\right)_F\right\vert \leq \left\vert A_m\left(f(\bx)\right)_F\right\vert + \left\vert \beta_{m-1}\right\vert\left\vert \left(U_I^{-1}L_I^{-1}\left(f(\bx)\right)_I\right)_0\right\vert \left\vert\left( A_m\right)_{c_{m-1}}\right\vert,
\end{equation*}
so that using (\ref{eq:maj_T_inv_F1}) with $n=m-1$ and $k=0$, we get 
\begin{equation*}
\left\vert \left(Af(\bx)\right)_F\right\vert \leq \left\vert A_m\left(f(\bx)\right)_F\right\vert + \left\vert \beta_{m-1}\right\vert\eta\left(\sum_{l=0}^{m-2}\theta^l\frac{\left\vert f(\bx)\right\vert_{m+l}}{\left\vert \mu_{m+l}\right\vert}\right) \left\vert\left( A_m\right)_{c_{m-1}}\right\vert ,
\end{equation*}
which provides the bound \eqref{eq:Y_F}. 

Using (\ref{eq:operator_A}) again,
\begin{equation*}
\left\vert \left(Af(\bx)\right)_I\right\vert \leq \left\vert\lambda_m\right\vert \left(\left\vert\left(A_m\right)_{r_{m-1}} f(\bx)_F\right\vert + \left\vert \beta_{m-1} \left(A_m\right)_{m-1,m-1} \left(U_I^{-1} L_I^{-1} f(\bx)_I\right)_0\right\vert\right)\left\vert w_I\right\vert + \left\vert U_I^{-1} L_I^{-1} f(\bx)_I\right\vert,
\end{equation*}
so using (\ref{eq:maj_w_I}), (\ref{eq:maj_T_inv_F1}) and (\ref{eq:maj_T_inv_F2}) (again with $n=m-1$), we get
\begin{align*}
\left\vert \left(Af(\bx)\right)_{m+k}\right\vert &\leq \left(\left\vert\left(A_m\right)_{r_{m-1}} f(\bx)_F\right\vert + \left\vert \beta_{m-1} \left(A_m\right)_{m-1,m-1}\right\vert \eta\left(\sum_{l=0}^{m-2}\theta^l\frac{\left\vert f(\bx)\right\vert_{m+l}}{\left\vert \mu_{m+l}\right\vert}\right)\right)\eta\theta^{k}\frac{\left\vert \lambda_m\right\vert}{\left\vert\mu_m\right\vert} \nonumber\\
&+ \eta\sum_{l=0}^{k}\theta^{k-l}\frac{\left\vert f(\bx)\right\vert_{m+l}}{\left\vert \mu_{m+l}\right\vert} + \eta\sum_{l=k+1}^{m-2}\theta^{l-k}\frac{\left\vert f(\bx)\right\vert_{m+l}}{\left\vert \mu_{m+l}\right\vert},\quad \forall ~0\leq k\leq m-3, 
\end{align*}
which provides the bound \eqref{eq:Y_k_1}, and
\begin{align*}
\left\vert \left(Af(\bx)\right)_{m+k}\right\vert &\leq \left(\left\vert\left(A_m\right)_{r_{m-1}} f(\bx)_F\right\vert + \left\vert \beta_{m-1} \left(A_m\right)_{m-1,m-1}\right\vert \eta\left(\sum_{l=0}^{m-2}\theta^l\frac{\left\vert f(\bx)\right\vert_{m+l}}{\left\vert \mu_{m+l}\right\vert}\right)\right)\eta\theta^{k}\frac{\left\vert \lambda_m\right\vert}{\left\vert\mu_m\right\vert} \nonumber \\
&+ \eta \theta^k\sum_{l=0}^{m-2}\frac{\left\vert f(\bx)\right\vert_{m+l}}{\theta^{l}\left\vert \mu_{m+l}\right\vert},\quad \forall k\geq m-2,
\end{align*}
which provides the bound \eqref{eq:Y_k_2}. Finally, by \eqref{eq:M_Y}, $\theta^k (m+k)^s \leq \theta^M (m+M)^s$ for all $k> M$, and we obtain the bound \eqref{eq:Y_tail}.
\end{proof}
We present  in Section \ref{sec:radii_pol} the rationale behind the definition of $Y_{m+k}$ for $k>M$.

\subsection{Computation of the  \boldmath $Z$ \unboldmath bounds}
\label{sec:Z}

In order to compute the $Z$ bounds from Theorem~\ref{thm:Tfixedpt}, we need to estimate the quantity
\begin{equation*}
DT\left(\bx +y\right)z=\left(I-ADf\left(\bx+y\right)\right)z=\left(I-AA^{\dag}\right)z-A\left(Df\left(\bx+y\right)-A^{\dag}\right)z
\end{equation*}
for all $y,z\in B_{0}(r)$. We are going to bound each term separately in the next two Sub-subsections. We introduce the notation 
\begin{equation} \label{eq:W^s}
W_F^s \bydef \left(\frac{1}{\omega_0^s},\ldots,\frac{1}{\omega_{m-1}^s}\right)^T.
\end{equation}

\subsubsection{Estimates for \boldmath $(I-A A^\dagger)z$ \unboldmath}\label{sec:Z^1}

In this Sub-subsection, we present the bound on $(I-A A^\dagger)z$, which constitutes the first part of 
a bound for $Z$.

\begin{proposition}\label{prop:Z1}
Let $M$ be an integer satisfying \eqref{eq:M_Y}. We define $Z^1 = (Z^1_k)_{k \ge 0}$ component-wise by
\begin{equation}
\label{eq:Z1_F}
Z^1_F \bydef \left(\left\vert I-A_m\tilde K\right\vert W_F^s + \frac{\left\vert \beta_{m-1}\right\vert\left\vert \lambda_m\right\vert\theta^{2L}}{\left\vert \mu_m\right\vert\omega^s_{m-1}(1-\theta^2)} \left\vert A_m\right\vert_{c_{m-1}}\right)r,
\end{equation}
\begin{equation}
\label{eq:Z1_k}
Z^1_{m+k} \bydef \left(\left\vert I -A_m \tilde K\right\vert_{r_{m-1}}W_F^s + \frac{\left\vert \beta_{m-1}\right\vert\left\vert \lambda_m\right\vert\theta^{2L}}{\left\vert \mu_m\right\vert\omega^s_{m-1}(1-\theta^2)} \left\vert A_m\right\vert_{m-1, m-1}\right)\eta\theta^{k}\frac{\left\vert \lambda_m\right\vert}{\left\vert\mu_m\right\vert}r,\quad \forall~ 0\leq k\leq M,
\end{equation}
and 
\begin{equation}
\label{eq:Z1_tail}
Z^1_{m+k} \bydef Z^1_{m+M}\frac{\omega^s_{m+M}}{\omega^s_{m+k}},\quad \forall ~k>M.
\end{equation}
Then for all $ z\in B_{0}(r)$,
\begin{equation*}
\left\vert \left(I-AA^{\dag}\right)z\right\vert \leq Z^1.
\end{equation*}
\end{proposition}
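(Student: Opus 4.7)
The plan is to compute $AA^\dagger z$ explicitly block by block, verify that the bulk of the terms cancel, and then pass from the unknown quantity $(w_I)_0$ appearing in the true coupling matrix $K$ to its computable approximation $\tilde w$ appearing in $\tilde K$ using \eqref{eq:err_tilde_w}.

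Write $z = (z_F,z_I)^T \in B_0(r)$, so that $|z_k| \le r/\omega_k^s$. First compute $A^\dagger z$: from \eqref{eq:opertor_A_dag},
\begin{equation*}
(A^\dagger z)_F = D z_F + \beta_{m-1} z_m\, e_{m-1}^F, \qquad (A^\dagger z)_I = \lambda_m z_{m-1}\,\be + T z_I,
\end{equation*}
where $e_{m-1}^F$ denotes the last canonical vector of $\R^m$. Then apply $A$ as defined in \eqref{eq:operator_A}, using the two key identities $U_I^{-1}L_I^{-1}T = I$ (since $T = L_I U_I$) and $\left(U_I^{-1}L_I^{-1}\right)_{r_0} \be = (w_I)_0$. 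After collecting terms, I expect the many $\beta_{m-1}\lambda_m z_m (A_m)_{m-1,m-1}$ contributions to cancel, leaving the clean identities
\begin{equation*}
(AA^\dagger z)_F = A_m K z_F, \qquad (AA^\dagger z)_I = z_I + \lambda_m\, w_I\,\bigl[(I - A_m K) z_F\bigr]_{m-1},
\end{equation*}
with $K$ as defined on page just after \eqref{eq:U-system}. This is the main algebraic step and also the main obstacle: keeping track of all the tensor product terms without arithmetic slips.

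Next, replace $K$ by the computable $\tilde K$ using $K - \tilde K = -\beta_{m-1}\lambda_m \bigl((w_I)_0 - \tilde w\bigr)\, E_{m-1,m-1}$, where $E_{m-1,m-1}$ is the matrix with a single $1$ in position $(m-1,m-1)$. This produces
\begin{equation*}
(I - AA^\dagger) z = \begin{pmatrix} (I - A_m \tilde K) z_F + \beta_{m-1}\lambda_m\bigl((w_I)_0 - \tilde w\bigr) z_{m-1} (A_m)_{c_{m-1}} \\[2pt] -\lambda_m\, w_I\,\bigl[(I - A_m \tilde K) z_F + \beta_{m-1}\lambda_m\bigl((w_I)_0 - \tilde w\bigr) z_{m-1} (A_m)_{m-1,m-1}\bigr]_{m-1}\end{pmatrix}.
\end{equation*}
Take componentwise absolute values, use $|z_F| \le r\, W_F^s$ and $|z_{m-1}| \le r/\omega_{m-1}^s$, apply Lemma bounding $|(w_I)_0 - \tilde w|$ via \eqref{eq:err_tilde_w}, and apply \eqref{eq:maj_w_I} to control $|(w_I)_k| \le \eta \theta^k / |\mu_m|$. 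Collecting these bounds yields exactly formulas \eqref{eq:Z1_F} and \eqref{eq:Z1_k} for $0 \le k \le M$.

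Finally, for $k > M$, the raw bound on $|((I-AA^\dagger) z)_{m+k}|$ scales as $\theta^k$ times a factor independent of $k$, whereas $Z^1_{m+M}\,\omega_{m+M}^s/\omega_{m+k}^s$ scales as $\theta^M (m+M)^s/(m+k)^s$. Condition \eqref{eq:M_Y} guarantees $m+M \ge -s/\ln\theta$, which is precisely the threshold beyond which the map $k \mapsto \theta^k (m+k)^s$ is monotone non-increasing; hence $\theta^k \omega_{m+k}^s \le \theta^M \omega_{m+M}^s$ for all $k \ge M$, giving \eqref{eq:Z1_tail}. This monotonicity argument is identical in spirit to the ones used in the three $\varphi_i$ lemmas of the previous subsection.
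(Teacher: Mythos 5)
Your proposal follows essentially the same route as the paper's proof: expand $AA^\dagger z$ block by block, exploit $U_I^{-1}L_I^{-1}T = I$ and $(U_I^{-1}L_I^{-1})_{r_0}\be = (w_I)_0$, substitute $K \to \tilde K$ via the error on $\tilde w$, and then bound using \eqref{eq:maj_w_I}, \eqref{eq:err_tilde_w}, and the monotonicity of $k \mapsto \theta^k(m+k)^s$ guaranteed by \eqref{eq:M_Y}. The intermediate identities you state for $(AA^\dagger z)_F$ and $(AA^\dagger z)_I$ do indeed hold (the $\beta_{m-1}\lambda_m z_m (A_m)_{m-1,m-1} w_I$ terms cancel as you anticipated), and the resulting displayed formula for $(I-AA^\dagger)z$ matches the paper's up to an inessential sign that disappears under absolute values. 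The only nit is a type slip in the second block: $\beta_{m-1}\lambda_m((w_I)_0 - \tilde w) z_{m-1}(A_m)_{m-1,m-1}$ is a scalar and cannot be added to the vector $(I-A_m\tilde K)z_F$ inside the $[\,\cdot\,]_{m-1}$ bracket; you should either write it outside the bracket or replace $(A_m)_{m-1,m-1}$ by $(A_m)_{c_{m-1}}$ inside. That is a presentational fix, not a gap in the argument.
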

\begin{proof}

Thanks to (\ref{eq:opertor_A_dag}) and (\ref{eq:operator_A}),
\begin{align*}
\left(AA^{\dag}z\right)_F &= A_m\left(Dz_F+ \begin{pmatrix} 0\\ \vdots \\ \beta_{m-1}z_m\end{pmatrix}\right) - \beta_{m-1}\left(U_I^{-1}L_I^{-1}\left(Tz_I+\lambda_m z_{m-1} \be \right)\right)_0 \left(A_m\right)_{c_{m-1}}\\
&= A_m D z_F + \beta_{m-1}z_m \left(A_m\right)_{c_{m-1}} - \beta_{m-1}\left(z_m+\lambda_m z_{m-1}\left(w_I\right)_0\right) \left(A_m\right)_{c_{m-1}}\\
&= A_m \tilde K z_F + \beta_{m-1}\lambda_m\left(\tilde w -\left(w_I\right)_0\right)z_{m-1}\left(A_m\right)_{c_{m-1}},
\end{align*}
and so
\begin{equation*}
\left(\left(I-AA^{\dag}\right)z\right)_F=\left(I-A_m\tilde K\right)z_F + \beta_{m-1}\lambda_m\left(\tilde w -\left(w_I\right)_0\right)z_{m-1}\left(A_m\right)_{c_{m-1}}.
\end{equation*}
For $z\in B_{0}(r)$ we have, using (\ref{eq:err_tilde_w}),
\begin{equation*}
\left\vert\left(I-AA^{\dag}\right)z\right\vert_F \leq \left(\left\vert I-A_m\tilde K\right\vert W_F^s + \frac{\left\vert \beta_{m-1}\right\vert\left\vert \lambda_m\right\vert\theta^{2L}}{\left\vert \mu_m\right\vert\omega^s_{m-1}(1-\theta^2)} \left\vert A_m\right\vert_{c_{m-1}}\right)r,
\end{equation*}
which provides the bound \eqref{eq:Z1_F}.

Using again (\ref{eq:opertor_A_dag}) and (\ref{eq:operator_A}), we get
\begin{align*}
\left(AA^{\dag}z\right)_I &= -\lambda_m\left(A_m\right)_{r_{m-1}}\left(Dz_F+ \begin{pmatrix} 0\\ \vdots \\ \beta_{m-1}z_m\end{pmatrix}\right)w_I + \left(U_I^{-1}L_I^{-1}+\Lambda\right)\left(Tz_I+\lambda_m z_{m-1}\be \right)\\
&=  z_I + \lambda_m w_I\\
& \left(-\left(A_m\right)_{r_{m-1}}Dz_F - \beta_{m-1}\left(A_m\right)_{m-1,m-1}z_m + z_{m-1} + \beta_{m-1}\left(A_m\right)_{m-1,m-1}\left(z_I+\lambda_mz_{m-1}w_I\right)_0 \right)\\
&=  z_I + \lambda_m\left(-\left(A_m\right)_{r_{m-1}}Dz_F + z_{m-1} + \beta_{m-1}\lambda_m\left(A_m\right)_{m-1,m-1}z_{m-1}\left(w_I\right)_0 \right)w_I\\
&=  z_I + \lambda_m\left(z_{m-1} -\left(A_m\right)_{r_{m-1}}\tilde Kz_F + \beta_{m-1}\lambda_m\left(A_m\right)_{m-1,m-1}z_{m-1}\left(\tilde w -\left(w_I\right)_0\right) \right)w_I\\
&=  z_I + \lambda_m\left(\left(I -A_m \tilde K\right)_{r_{m-1}}z_F + \beta_{m-1}\lambda_m\left(A_m\right)_{m-1,m-1}z_{m-1}\left(\tilde w -\left(w_I\right)_0\right)\right)w_I,\\
\end{align*}
and so
\begin{equation*}
\left(\left(I-AA^{\dag}\right)z\right)_I=-\lambda_m\left(\left(I -A_m K\right)_{r_{m-1}}z_F + \beta_{m-1}\lambda_m\left(A_m\right)_{m-1,m-1}z_{m-1}\left(\tilde w -\left(w_I\right)_0\right)\right)w_I.
\end{equation*}
  
For $z\in B_{0}(r)$ we have, using (\ref{eq:maj_w_I}) and (\ref{eq:err_tilde_w}),
\begin{equation*}
\left\vert \left(I-AA^{\dag}\right)z\right\vert_{m+k} \leq \left(\left\vert I -A_m \tilde K\right\vert_{r_{m-1}}W_F^s + \frac{\left\vert \beta_{m-1}\right\vert\left\vert \lambda_m\right\vert\theta^{2L}}{\left\vert \mu_m\right\vert\omega^s_{m-1}(1-\theta^2)} \left\vert A_m\right\vert_{m-1, m-1}\right)\eta\theta^{k}\frac{\left\vert \lambda_m\right\vert}{\left\vert\mu_m\right\vert}r,\quad \forall~ k\geq 0,
\end{equation*}
which gives \eqref{eq:Z1_k}, as well as \eqref{eq:Z1_tail} thanks to \eqref{eq:M_Y}.
\end{proof}

\subsubsection{Estimates for \boldmath $A\left(Df\left(\bx+y\right)-A^{\dag}\right)z$ \unboldmath}\label{sec:Z^2}

This Sub-subsection is devoted to the exposition of a bound for $A\left(Df\left(\bx+y\right)-A^{\dag}\right)z$, which
constitutes the second (and last) part of a bound for $Z$. This bound is detailed  in Proposition~\ref{prop:Z2}.
\medskip

Recall the assumption that the nonlinear part $N$ is polynomial of degree 2. Hence, $Df\left(\bx+y\right)$ can be written as a finite Taylor expansion
\begin{equation*}
Df\left(\bx+y\right) = Df\left(\bx\right) + D^2f\left(\bx\right)(y),
\end{equation*}
and
\begin{equation} \label{eq:splitting}
\left(Df\left(\bx+y\right)-A^{\dag}\right)z = \left(Df\left(\bx\right)-A^{\dag}\right)z + D^2f\left(\bx\right)(y,z).
\end{equation}
We are going to bound the two terms of \eqref{eq:splitting} separately. Let us denote by $\sigma$ the coefficient of degree 2 of $f$, that is  $D^2f\left(\bx\right)(y,z)=2\sigma(y\ast z)$. We bound this convolution product thanks to the following result:

\begin{lemma}\label{lem:maj}
Let $s\geq 2$ be an algebraic decay rate and $n\geq 6$, let
 $L\geq 1$ be computational parameters. For $x,y\in\Omega^s$ and for any $k \ge 0$,
\begin{equation*}
\left\vert \left(x\ast y\right)_k\right\vert \leq \alpha_k^s(n)\frac{\Vert x\Vert_s \Vert y\Vert_s}{\omega_k^s},
\end{equation*}
where
\begin{equation*}
\alpha_k^s(n) \bydef
\left\lbrace
\begin{aligned}
& 1+2\sum_{l=1}^L\frac{1}{l^s}+\frac{2}{(s-1)L^{s-1}},\quad k=0,\\
& 2+2\sum_{l=1}^L\frac{1}{l^s}+\frac{2}{(s-1)L^{s-1}}+\sum_{l=1}^{k-1}\frac{k^s}{l^s(k-l)^s},\quad 1\leq k<n,\\
& 2+2\sum_{l=1}^L\frac{1}{l^s}+\frac{2}{(s-1)L^{s-1}}+2\left(\frac{n}{n-1}\right)^s + \left(\frac{4\ln(n-2)}{n}+\frac{\pi^2-6}{3}\right)\left(\frac{2}{n}+\frac{1}{2}\right)^s,\quad k\geq n.
\end{aligned}
\right.
\end{equation*} 
\end{lemma}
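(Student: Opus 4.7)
The plan is to expand the discrete convolution via its definition using the symmetric extensions $\tx$ and $\ty$, split the index set $\{(k_1,k_2)\in\Z^2 : k_1+k_2=k\}$ according to signs and vanishing of the coordinates, and apply the pointwise bound $|\tx_i| \le \|x\|_s/\omega_{|i|}^s$ to each piece. Multiplying through by $\omega_k^s$ then produces the dimensionless factors that accumulate into $\alpha_k^s(n)$.

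For $k=0$, using $(x*y)_0 = x_0 y_0 + 2\sum_{l\ge 1} x_l y_l$, the boundary term contributes the leading $1$ (since $\omega_0^s=1$) and the tail is majorized by $2\|x\|_s\|y\|_s \sum_{l\ge 1} 1/l^{2s} \le 2\|x\|_s\|y\|_s \sum_{l\ge 1} 1/l^s$, using $1/l^{2s}\le 1/l^s$. Splitting at $L$ and applying the integral estimate $\sum_{l>L}1/l^s \le 1/((s-1)L^{s-1})$ (valid since $s\ge 2$) yields $\alpha_0^s(n)$ as stated.

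For $1\le k < n$, I would decompose
\begin{equation*}
(x*y)_k = x_0 y_k + x_k y_0 + \sum_{l=1}^{k-1} x_l y_{k-l} + \sum_{l\ge 1} x_l y_{k+l} + \sum_{l\ge 1} x_{k+l} y_l.
\end{equation*}
The two boundary terms each contribute $\|x\|_s\|y\|_s/k^s$, producing the leading $2$ in $\alpha_k^s(n)$ after multiplication by $\omega_k^s = k^s$. The positive-index middle sum gives $\sum_{l=1}^{k-1} k^s/(l^s(k-l)^s)$ directly. Each of the two tail sums (coming from negative indices under the symmetric extension) is bounded by $\sum_{l\ge 1} k^s/(l^s(k+l)^s) \le \sum_{l\ge 1} 1/l^s$, since $k \le k+l$; handling this sum as in the $k=0$ case produces the remaining $2\sum_{l=1}^L 1/l^s + 2/((s-1)L^{s-1})$.

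For $k\ge n$ the decomposition is identical, so the only additional step is a uniform-in-$k$ bound on the middle sum $\sum_{l=1}^{k-1} k^s/(l^s(k-l)^s)$. The two endpoint terms $l=1$ and $l=k-1$ each equal $(k/(k-1))^s$, which is decreasing in $k$ and hence majorized by $(n/(n-1))^s$; together they account for the $2(n/(n-1))^s$ summand. The remaining interior range $2\le l\le k-2$ is controlled by invoking the quantitative convolution estimate from \cite{MR2718657}: splitting the sum about $l = k/2$, bounding $k/(k-l)\le 2$ on the small-$l$ half, and comparing $\sum 1/l^s$ with an integral accounts for the second summand $(4\ln(n-2)/n + (\pi^2-6)/3)(2/n + 1/2)^s$. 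The main obstacle is precisely this uniform interior bound---every other ingredient reduces to elementary summation and integral comparison---and preserving the sharp form asserted in the lemma requires the careful splitting scheme of \cite{MR2718657}, which I would invoke rather than redo from scratch.
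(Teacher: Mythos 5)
The paper itself offers no proof of this lemma: it is a one-line pointer to \cite{MR3077902} (for $s\geq 2$) and \cite{MR3125637} (for $1<s<2$). Your proposal is therefore doing more work than the paper does, and your decomposition $(x*y)_k = x_0 y_k + x_k y_0 + \sum_{l=1}^{k-1}x_l y_{k-l} + \sum_{l\geq 1}x_l y_{k+l} + \sum_{l\geq 1}x_{k+l}y_l$ (via the symmetric extension) is exactly the right starting point; the $k=0$ case (using $1/l^{2s}\leq 1/l^s$ and the integral tail bound) and the $1\leq k<n$ case are carried out correctly and match the stated $\alpha_k^s(n)$.

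Two issues arise in the $k\geq n$ case. First, you cite \cite{MR2718657} for the uniform interior bound, but the paper's own reference for this lemma is \cite{MR3077902} (the ``one-dimensional estimates'' paper); \cite{MR2718657} is the earlier higher-dimensional paper with cruder constants, and it is not where this particular closed form comes from. Second, and more substantively, the splitting you sketch---split at $l=k/2$ and bound $k/(k-l)\leq 2$ on the small-$l$ half---produces a prefactor $2^s$ in front of the residual sum, whereas the lemma asserts the strictly smaller $\left(\tfrac{2}{n}+\tfrac{1}{2}\right)^s$, and the logarithmic correction $\tfrac{4\ln(n-2)}{n}$ does not appear from a plain integral comparison either. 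So your outlined route would prove a bound of the same \emph{shape} but with worse constants; reproducing the exact expression in $\alpha_k^s(n)$ for $k\geq n$ requires the finer bookkeeping in \cite{MR3077902}, which you acknowledge but do not reproduce. Since the paper itself is content to cite, this is not a fatal defect, but as a self-contained argument the $k\geq n$ interior estimate is left open and the proposed splitting as written would not close it to the stated constant.
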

\begin{proof}
See \cite{MR3077902} for a proof of this bound and \cite{MR3125637} for a similar bound for $1<s<2$.
\end{proof}
\begin{remark}
It is important to notice here that $\alpha_k^s(n)=\alpha_n^s(n)$ for all $k\geq n$. From now on, we assume that $m$ is taken larger or equal to $6$, which will allow us to use Lemma~\ref{lem:maj} with $n=m$. Note that this condition is not stringent,
 since in practice more than 6 modes are usually needed in order to get a good numerical solution $\bx$.
\end{remark}

We begin by bounding the first term of \eqref{eq:splitting}.
\begin{proposition}
Define $C^1= C^1 (\bx) = \left( C^1_k (\bx) \right)_{k \ge 0} $ component-wise by
\begin{equation*}
C^1_0(\bx) \bydef 0,\quad C^1_k(\bx) \bydef 2\left\vert \sigma\right\vert\sum_{l=m-k}^{m-1} \frac{\left\vert \bx_{l}\right\vert}{\omega^s_{k+l}},\ \forall~ 1\leq k\leq m-1,
\end{equation*}
and
\begin{equation*}
C^1_{m+k}(\bx) \bydef \frac{2\left\vert \sigma\right\vert\alpha^s_{m}(m)\left\Vert \bx\right\Vert_s}{\omega^s_{m+k}},\quad \forall~ k\geq 0.
\end{equation*}
Then for all $z\in B_{0}(r)$
\begin{equation*}
\left\vert \left(Df(\bx)-A^{\dag}\right)z\right\vert \leq C^1 (\bx)r.
\end{equation*}
\end{proposition}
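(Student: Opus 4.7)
The plan is to obtain an explicit pointwise formula for $(Df(\bx)-A^{\dag})z$ and then bound each block component-wise. Since $N$ is quadratic with leading coefficient $\sigma$, we have $DN(\bx)z = 2\sigma\,(\bx\ast z)$. The operator $A^{\dag}$ reproduces the linear part $\cL$ exactly — the tridiagonal entries of $\cL$ inside the $F$-block are absorbed in $D$, the coupling $\beta_{m-1}z_m$ into row $m-1$ is kept, the coupling $\lambda_m z_{m-1}$ into row $m$ is kept, and the tridiagonal tail appears through $T$. By contrast, the nonlinear derivative $DN(\bx)$ is encoded in $A^{\dag}$ only through the finite block $D=Df^{(m)}(\bx)$, whose action on $z_F$ is precisely $z_F\mapsto 2\sigma\,(\bx\ast z_F)_F$. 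Subtracting, I expect
\begin{equation*}
\bigl((Df(\bx)-A^{\dag})z\bigr)_F \;=\; 2\sigma\,(\bx\ast z_I)_F,\qquad \bigl((Df(\bx)-A^{\dag})z\bigr)_I \;=\; 2\sigma\,(\bx\ast z)_I,
\end{equation*}
so the proof reduces to bounding these two convolutions.

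For the $F$-part I would exploit the disjoint supports of the symmetric extensions: $\tilde{\bx}_{k_1}$ vanishes for $|k_1|\ge m$ while $\widetilde{z_I}_{k_2}$ vanishes for $|k_2|<m$. For $k=0$, no pair $(k_1,k_2)$ with $k_1+k_2=0$ satisfies both constraints, hence $(\bx\ast z_I)_0=0$, which explains why $C^1_0=0$. For $1\le k\le m-1$, inspection of the support constraints forces $k_1=-l$ with $l\in[m-k,m-1]$ and $k_2=k+l\ge m$, yielding
\begin{equation*}
(\bx\ast z_I)_k \;=\; \sum_{l=m-k}^{m-1}\bx_l\,z_{k+l},
\end{equation*}
so that the estimate $|z_{k+l}|\le r/\omega_{k+l}^{s}$ immediately gives the claimed $C^1_k(\bx)\,r$.

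For the $I$-part ($k\ge m$) I would apply Lemma~\ref{lem:maj} with $n=m$ (admissible since $m\ge 6$) to the full product $\bx\ast z$, obtaining $|(\bx\ast z)_k|\le \alpha_k^s(m)\,\|\bx\|_s\|z\|_s/\omega_k^s$, and then invoke the remark that $\alpha_k^s(m)=\alpha_m^s(m)$ for all $k\ge m$ together with $\|z\|_s\le r$ to produce $C^1_{m+k}(\bx)\,r$. The only genuine obstacle is the combinatorial bookkeeping for the $F$-block convolution: once the correct range of surviving indices is pinned down by tracking the symmetric extension, everything else is a routine rewriting together with the tail application of Lemma~\ref{lem:maj}.
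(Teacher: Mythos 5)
Your proposal is correct and follows essentially the same route as the paper: decompose $(Df(\bx)-A^{\dag})z$ into its $F$- and $I$-blocks, observe that the linear parts cancel so only the nonlinear derivative remains (yielding $2\sigma(\bx\ast z_I)_F$ and $2\sigma(\bx\ast z)_I$), bound the $F$-block by direct index bookkeeping using $\bx_l=0$ for $l\ge m$, and bound the $I$-block via Lemma~\ref{lem:maj} with $n=m$ together with $\alpha^s_k(m)=\alpha^s_m(m)$ for $k\ge m$. The only cosmetic difference is that you make the support-disjointness argument for the truncated convolution explicit, whereas the paper states the resulting bound directly.
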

\begin{proof}
According to the definition of $A^{\dag}$ in (\ref{eq:opertor_A_dag}), we see that
\begin{align*}
\left(\left(Df(\bx)-A^{\dag}\right)z\right)_F &= \left(Df(\bx)z\right)_F - Df^{(m)}(\bx)z_F - \left( \begin{array}{c} 0\\ 0 \\ . \\ . \\ 0 \\ \beta_{m-1}\, z_m \\ \end{array} \right)\\
&= 2\sigma\left((\bx\ast z)_F-(\bx\ast z_F)_F\right),
\end{align*}
where in the convolution product, $z_F$ must be understood as the infinite vector $(z_F,0,\ldots,0,\ldots)^T$. Therefore, 
$\left(\left(Df(\bx)-A^{\dag}\right)z\right)_0 = 0$, and for all $z\in B_{0}(r)$,
\begin{equation*}
\left\vert\left(Df(\bx)-A^{\dag}\right)z\right\vert_k \leq  2\left\vert \sigma\right\vert r\sum_{l=m-k}^{m-1} \frac{\left\vert \bx_{l}\right\vert}{\omega^s_{k+l}},\quad \forall~~ 1\leq k\leq m-1.
\end{equation*}
Then, remembering that $Df(\bx)=\cL + DN(\bx)$ and (\ref{eq:opertor_A_dag}), we see that
\begin{equation*}
\left(\left(Df(\bx)-A^{\dag}\right)z\right)_I = \left(DN(\bx)z\right)_I = 2\sigma\left(\bx\ast z\right)_I,
\end{equation*}
so that using Lemma~\ref{lem:maj}, for all $z\in B_{0}(r)$, we end up with the bound
\begin{equation*}
\left\vert\left(Df(\bx)-A^{\dag}\right)z\right\vert_{m+k} \leq \frac{2\left\vert \sigma\right\vert\alpha^s_{m+k}(m)\left\Vert \bx\right\Vert_s}{\omega^s_{m+k}} r, \quad \forall~ k\geq 0 . \qedhere
\end{equation*}
\end{proof}

We now bound the second term of \eqref{eq:splitting}.
\begin{proposition}
Recall \eqref{eq:W^s} and define $C^2=\left( C^2_k\right)_{k\geq 0}$ component-wise by
\begin{equation*}
C^2_k \bydef \frac{2\left\vert \sigma\right\vert\alpha^s_k(m)}{\omega_k^s},\quad \forall~k\geq 0.
\end{equation*}
Then for all $y,z\in B_{0}(r)$
\begin{equation*}
\left\vert D^2f\left(\bx\right)(y,z)\right\vert \leq C^2 r^2.
\end{equation*}
\end{proposition}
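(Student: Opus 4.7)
The proof is essentially a one-line application of Lemma~\ref{lem:maj}, modulo identifying $D^2f(\bx)$. My plan is as follows.

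First, I would exploit the standing assumption that $N$ is a polynomial of degree $2$. Since $\cL$ is linear, the second derivative of $f = \cL + N$ equals the second derivative of $N$. Writing the quadratic part of $N$ in terms of the leading coefficient $\sigma$ as introduced just before equation \eqref{eq:splitting}, the bilinear map $D^2 f(\bx)$ does not actually depend on $\bx$ and one has the identity
\begin{equation*}
D^2 f(\bx)(y,z) = 2\sigma\,(y \ast z),
\end{equation*}
which has already been used implicitly in the preceding proposition. This reduces the statement to a bound on the convolution $y\ast z$.

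Next, I would apply Lemma~\ref{lem:maj} componentwise with $n = m$, which is legitimate since we have assumed $m \geq 6$ and $s > 1$ (in fact $s \geq 2$, as required by the lemma). For every $k \geq 0$,
\begin{equation*}
\left\vert (y\ast z)_k \right\vert \leq \alpha_k^s(m)\,\frac{\Vert y\Vert_s\,\Vert z\Vert_s}{\omega_k^s}.
\end{equation*}

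Finally, since $y,z \in B_0(r)$ means $\Vert y\Vert_s \leq r$ and $\Vert z\Vert_s \leq r$, combining the two displays above gives
\begin{equation*}
\left\vert \bigl(D^2 f(\bx)(y,z)\bigr)_k \right\vert \leq 2|\sigma|\,\alpha_k^s(m)\,\frac{r^2}{\omega_k^s} = C^2_k\, r^2,
\end{equation*}
which is exactly the componentwise inequality claimed. There is no real obstacle here: the only subtle point is to ensure that the hypotheses of Lemma~\ref{lem:maj} ($s \geq 2$ and $n = m \geq 6$) are indeed in force, which is covered by the remark following that lemma.
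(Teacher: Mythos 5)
Your proof is correct and matches the paper's argument exactly: identify $D^2f(\bx)(y,z)=2\sigma(y\ast z)$ and then apply Lemma~\ref{lem:maj} componentwise with $n=m$. The paper's own proof is just a terser statement of the same two steps.
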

\begin{proof}
Remembering that $D^2f\left(\bx\right)(y,z)=2\sigma(y\ast z)$, this is
 a consequence of Lemma~\ref{lem:maj}.
\end{proof}
 
Finally,
\begin{equation*}
\left\vert A\left(Df\left(\bx+y\right)-A^{\dag}\right)z\right\vert \leq \left\vert A\right\vert \left( C^1(\bx)r + C^2 r^2\right),
\end{equation*}
and we are left to bound $\left\vert A\right\vert C^1(\bx)$ and $\left\vert A\right\vert C^2$.
\begin{proposition}\label{prop:D1}
Let $M$ be an integer satisfying \eqref{eq:M_A} and \eqref{eq:M_Y}. We define
 $D^1=\left( D^1_k \right)_{k \ge 0}$ component-wise by
\begin{equation}
\label{eq:D1_F}
D^1_F(\bx)  \bydef  \left\vert A_m\right\vert C^1_F(\bx) + \frac{2\left\vert \beta_{m-1}\right\vert\eta \left\vert \sigma\right\vert\alpha^s_{m}(m) \Vert \bx\Vert_s}{C_1(1-\theta)\omega_m^{s+s_L}} \left\vert A_m\right\vert_{c_{m-1}},
\end{equation}
\begin{align}
\label{eq:D1_k}
D^1_{m+k}(\bx) &\bydef  \left(\left\vert A_m\right\vert_{r_{m-1}} C^1_F(\bx) + \frac{2\left\vert \beta_{m-1}\right\vert\left\vert A_m\right\vert_{m-1,m-1}\eta \left\vert \sigma\right\vert\alpha^s_{m}(m) \Vert \bx\Vert_s}{C_1(1-\theta)\omega_m^{s+s_L}} \right)\eta\frac{\left\vert \lambda_m\right\vert}{\left\vert\mu_m\right\vert}\theta^k \nonumber\\
&+ \frac{2\eta \left\vert \sigma\right\vert\alpha^s_{m}(m) \Vert \bx\Vert_s}{C_1\omega_{m+k}^{s+s_L}} \left(\sum_{l=0}^{k}\theta^{k-l}\left(\frac{m+k}{m+l}\right)^{s+s_L}  + \frac{\theta}{1-\theta}\right),\quad \forall~ 0\leq k< M,
\end{align}
\begin{align}
\label{eq:D1_M}
D^1_{m+M}(\bx) &\bydef  \left(\left\vert A_m\right\vert_{r_{m-1}} C^1_F(\bx) + \frac{2\left\vert \beta_{m-1}\right\vert\left\vert A_m\right\vert_{m-1,m-1}\eta \left\vert \sigma\right\vert\alpha^s_{m}(m) \Vert \bx\Vert_s}{C_1(1-\theta)\omega_{m}^{s+s_L}} \right)\eta\frac{\left\vert \lambda_m\right\vert}{\left\vert\mu_m\right\vert}\theta^M \nonumber\\
&+ \frac{2\eta \left\vert \sigma\right\vert\alpha^s_{m}(m) \Vert \bx\Vert_s}{C_1\omega_{m+M}^{s+s_L}} \left(\chi  + \frac{\theta}{1-\theta}\right),
\end{align}
and 
\begin{equation}
\label{eq:D1_tail}
D^1_{m+k}(\bx) \bydef  D^1_{m+M}(\bx)\frac{\omega_{m+M}^s}{\omega_{m+k}^s},\quad \forall~ k>M.
\end{equation}
Then 
\begin{equation*}
\left\vert A\right\vert C^1(\bx) \leq D^1(\bx).
\end{equation*}
\end{proposition}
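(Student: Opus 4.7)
The plan is to exploit the block-plus-rank-one structure of $A$ given by \eqref{eq:operator_A} and apply the estimates from the preceding subsection, namely Lemma~\ref{lem:x_vs_y}, \eqref{eq:maj_w_I}, \eqref{eq:maj_unif_expl}, and \eqref{eq:maj_unif_tail}, componentwise to $|A|\,C^1(\bx)$. A useful preliminary observation is that, directly from the definition of $C^1(\bx)$, one has the uniform bound $\|C^1_I(\bx)\|_s \le 2|\sigma|\alpha_m^s(m)\|\bx\|_s$, so that $C^1_I(\bx)$ can be treated as a generic element of $\Omega^s$ when fed to $U_I^{-1}L_I^{-1}$.

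For the finite part, reading off the top block-row of \eqref{eq:operator_A} and taking absolute values gives
\begin{equation*}
\bigl|(AC^1(\bx))_F\bigr| \le |A_m|\,C^1_F(\bx) + |\beta_{m-1}|\,|(A_m)_{c_{m-1}}|\,\bigl|\bigl(U_I^{-1}L_I^{-1}C^1_I(\bx)\bigr)_0\bigr|.
\end{equation*}
The key scalar estimate, obtained from Lemma~\ref{lem:x_vs_y} at $k=0$, is
\begin{equation*}
\bigl|\bigl(U_I^{-1}L_I^{-1}C^1_I(\bx)\bigr)_0\bigr| \le \eta\sum_{j=0}^{\infty}\theta^j\,\frac{|C^1_{m+j}(\bx)|}{|\mu_{m+j}|} \le \frac{2\eta|\sigma|\alpha_m^s(m)\|\bx\|_s}{C_1(1-\theta)\,\omega_m^{s+s_L}},
\end{equation*}
where I use $|\mu_{m+j}|\ge C_1\omega_{m+j}^{s_L}$ from \eqref{eq:asump1_tridiag}, the explicit form $|C^1_{m+j}(\bx)|=2|\sigma|\alpha_m^s(m)\|\bx\|_s/\omega_{m+j}^s$, and the monotonicity $\omega_{m+j}^{s+s_L}\ge\omega_m^{s+s_L}$. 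Substituting produces \eqref{eq:D1_F}.

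The infinite part requires more care. The bottom block-row of \eqref{eq:operator_A} combined with the definition of $\Lambda$ yields
\begin{equation*}
(AC^1(\bx))_{m+k} = -\lambda_m w_{m+k}\bigl((A_m)_{r_{m-1}}C^1_F(\bx)\bigr) + \bigl(U_I^{-1}L_I^{-1}C^1_I(\bx)\bigr)_{m+k} + \beta_{m-1}\lambda_m(A_m)_{m-1,m-1}\,w_{m+k}\bigl(U_I^{-1}L_I^{-1}C^1_I(\bx)\bigr)_0.
\end{equation*}
The first and third terms share the factor $w_{m+k}$, which I bound by $\eta\theta^k/|\mu_m|$ using \eqref{eq:maj_w_I}; combined with the scalar estimate above, this reproduces the $\eta\theta^k|\lambda_m|/|\mu_m|$ prefactor in \eqref{eq:D1_k} and \eqref{eq:D1_M}. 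For the middle term I apply \eqref{eq:maj_unif_expl} when $0\le k<M$, giving the explicit sum in \eqref{eq:D1_k}, and \eqref{eq:maj_unif_tail} when $k=M$, giving the $\chi+\theta/(1-\theta)$ contribution in \eqref{eq:D1_M}, each time replacing $\|C^1_I(\bx)\|_s$ by $2|\sigma|\alpha_m^s(m)\|\bx\|_s$.

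The remaining case $k>M$, which justifies \eqref{eq:D1_tail}, reduces to verifying that the same componentwise bound, multiplied by $\omega_{m+k}^s$, is dominated by its value at $k=M$. This requires two monotonicity facts for $k\ge M$: first, $k\mapsto\theta^k(m+k)^s$ is nonincreasing once \eqref{eq:M_Y} is in force, because its logarithmic derivative $\ln\theta+s/(m+k)$ is nonpositive as soon as $m+k\ge -s/\ln\theta$; and second, $(m+k)^{s_L}$ is nondecreasing, so $1/\omega_{m+k}^{s+s_L}\le (\omega_{m+M}^s/\omega_{m+k}^s)/\omega_{m+M}^{s+s_L}$. I expect the main source of difficulty to be the careful bookkeeping of the rank-one contributions from $\Lambda$ and from the off-diagonal block of $A$: the scalar estimate on $(U_I^{-1}L_I^{-1}C^1_I(\bx))_0$ is used \emph{twice}, once in the finite part and once inside the $\Lambda$ contribution to the tail, and an indexing slip would propagate into every component of $D^1(\bx)$; beyond that bookkeeping, everything is a mechanical application of the lemmas of Section~\ref{sec:tridiagonal} and the preceding subsection.
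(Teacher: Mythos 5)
Your proof is correct and takes essentially the same route as the paper: split according to the block structure of $A$ from \eqref{eq:operator_A}, bound the $w_I$-factors via \eqref{eq:maj_w_I}, the $U_I^{-1}L_I^{-1}$ terms via \eqref{eq:maj_unif_expl}/\eqref{eq:maj_unif_tail}, and treat the tail $k>M$ using monotonicity guaranteed by \eqref{eq:M_Y}. The only cosmetic difference is that you derive the scalar bound on $\left(U_I^{-1}L_I^{-1}C^1_I(\bx)\right)_0$ directly from Lemma~\ref{lem:x_vs_y}, whereas the paper reads it off from the $k=0$ case of \eqref{eq:maj_unif_expl}; these give the same estimate.
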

\begin{proof}
Thanks to (\ref{eq:operator_A}), 
\begin{equation*}
\left(\left\vert A\right\vert C^1(\bx)\right)_F \leq \left\vert A_m\right\vert C^1_F(\bx) + \left\vert \beta_{m-1}\right\vert \left\vert U_I^{-1}L_I^{-1} C^1_I(\bx)\right\vert_0 \left\vert A_m\right\vert_{c_{m-1}},
\end{equation*}
and using (\ref{eq:maj_unif_expl})
\begin{equation*}
\left\vert U_I^{-1}L_I^{-1} C^1_I(\bx)\right\vert_0 \leq \frac{\eta \Vert C^1_I(\bx) \Vert_s}{C_1(1-\theta)\omega_m^{s+s_L}} \leq \frac{2\eta \left\vert \sigma\right\vert\alpha^s_{m}(m) \Vert \bx\Vert_s}{C_1(1-\theta)\omega_m^{s+s_L}},
\end{equation*}
so that \eqref{eq:D1_F} holds.
Still thanks to (\ref{eq:operator_A}),
\begin{align*}
\left(\left\vert A\right\vert C^1(\bx)\right)_I &\leq \left\vert \lambda_m\right\vert\left\vert A_m\right\vert_{r_{m-1}} C^1_F(\bx) \left\vert w_I\right\vert + \left\vert U_I^{-1}L_I^{-1} C^1_I(\bx)\right\vert + \left\vert \lambda_m\right\vert \left\vert \beta_{m-1}\right\vert\left\vert A_m\right\vert_{m-1,m-1} \left\vert U_I^{-1}L_I^{-1} C^1_I(\bx)\right\vert_0 \left\vert w_I\right\vert \\
&\leq \left\vert \lambda_m\right\vert\left(\left\vert A_m\right\vert_{r_{m-1}} C^1_F(\bx) + \frac{2\left\vert \beta_{m-1}\right\vert\left\vert A_m\right\vert_{m-1,m-1}\eta \left\vert \sigma\right\vert\alpha^s_{m}(m) \Vert \bx\Vert_s}{C_1(1-\theta)\omega_m^{s+s_L}} \right)\left\vert w_I\right\vert + \left\vert U_I^{-1}L_I^{-1} C^1_I(\bx)\right\vert.
\end{align*}
Using (\ref{eq:maj_w_I}) and (\ref{eq:maj_unif_expl}),
 we get 
\begin{align*}
\left(\left\vert A\right\vert C^1(\bx)\right)_{m+k} &\leq \left(\left\vert A_m\right\vert_{r_{m-1}} C^1_F(\bx) + \frac{2\left\vert \beta_{m-1}\right\vert\left\vert A_m\right\vert_{m-1,m-1}\eta \left\vert \sigma\right\vert\alpha^s_{m}(m) \Vert \bx\Vert_s}{C_1(1-\theta)\omega_m^{s+s_L}} \right)\eta\frac{\left\vert \lambda_m\right\vert}{\left\vert\mu_m\right\vert}\theta^k \nonumber\\
&+ \frac{2\eta \left\vert \sigma\right\vert\alpha^s_{m}(m) \Vert \bx\Vert_s}{C_1\omega_{m+k}^{s+s_L}} \left(\sum_{l=0}^{k}\theta^{k-l}\left(\frac{m+k}{m+l}\right)^{s+s_L}  + \frac{\theta}{1-\theta}\right),\quad \forall~ 0\leq k< M,
\end{align*}
so that \eqref{eq:D1_k} holds, and using (\ref{eq:maj_w_I}) and (\ref{eq:maj_unif_tail}), we get
\begin{align*}
\left(\left\vert A\right\vert C^1(\bx)\right)_{m+M} &\leq \left(\left\vert A_m\right\vert_{r_{m-1}} C^1_F(\bx) + \frac{2\left\vert \beta_{m-1}\right\vert\left\vert A_m\right\vert_{m-1,m-1}\eta \left\vert \sigma\right\vert\alpha^s_{m}(m) \Vert \bx\Vert_s}{C_1(1-\theta)\omega_{m}^{s+s_L}} \right)\eta\frac{\left\vert \lambda_m\right\vert}{\left\vert\mu_m\right\vert}\theta^M \nonumber\\
&+ \frac{2\eta \left\vert \sigma\right\vert\alpha^s_{m}(m) \Vert \bx\Vert_s}{C_1\omega_{m+M}^{s+s_L}} \left(\chi  + \frac{\theta}{1-\theta}\right),
\end{align*}
so that \eqref{eq:D1_M} holds. As before, \eqref{eq:D1_tail} follows from \eqref{eq:M_Y}.
\end{proof}
We get similar results for the second order term.

\begin{proposition}\label{prop:D2}
Let $M$ be an integer satisfying \eqref{eq:M_A} and \eqref{eq:M_Y}. Define $D^2=\left( D^2_k \right)_{k \ge 0}$ component-wise by
\begin{equation*}
D^2_F \bydef \left\vert A_m\right\vert C^2_F + \frac{2\left\vert \beta_{m-1}\right\vert\eta \left\vert \sigma\right\vert\alpha^s_{m}(m)}{C_1(1-\theta)\omega_{m}^{s+s_L}} \left\vert A_m\right\vert_{c_{m-1}},
\end{equation*}
\begin{align*}
D^2_{m+k}&\bydef \left(\left\vert A_m\right\vert_{r_{m-1}} C^2_F + \frac{2\left\vert \beta_{m-1}\right\vert\left\vert A_m\right\vert_{m-1,m-1}\eta \left\vert \sigma\right\vert\alpha^s_{m}(m) }{C_1(1-\theta)\omega_{m}^{s+s_L}} \right)\eta\frac{\left\vert \lambda_m\right\vert}{\left\vert\mu_m\right\vert}\theta^k \\
&+ \frac{2\eta \left\vert \sigma\right\vert\alpha^s_{m}(m) }{C_1\omega_{m+k}^{s+s_L}} \left(\sum_{l=0}^{k}\theta^{k-l}\left(\frac{m+k}{m+l}\right)^{s+s_L}  + \frac{\theta}{1-\theta}\right),\quad \forall~ 0\leq k< M,
\end{align*}
\begin{align*}
D^2_{m+M}&\bydef \left(\left\vert A_m\right\vert_{r_{m-1}} C^2_F + \frac{2\left\vert \beta_{m-1}\right\vert\left\vert A_m\right\vert_{m-1,m-1}\eta \left\vert \sigma\right\vert\alpha^s_{m}(m) }{C_1(1-\theta)\omega_{m}^{s+s_L}} \right)\eta\frac{\left\vert \lambda_m\right\vert}{\left\vert\mu_m\right\vert}\theta^M + \frac{2\eta \left\vert \sigma\right\vert\alpha^s_{m}(m) }{C_1\omega_{m+M}^{s+s_L}} \left(\chi  + \frac{\theta}{1-\theta}\right),
\end{align*}
and
\begin{equation*}
D^2_{m+k} \bydef D^2_{m+M}\frac{\omega_{m+M}^s}{\omega_{m+k}^s},\quad \forall~ k>M.
\end{equation*}
Then 
\begin{equation*}
\left\vert A\right\vert C^2 \leq D^2.
\end{equation*}
\end{proposition}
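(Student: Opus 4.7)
The plan is to mirror the argument of Proposition~\ref{prop:D1} essentially verbatim, replacing $C^1(\bx)$ by $C^2$. The key simplification is that $C^2$ carries no factor of $\|\bx\|_s$, since $D^2f(\bx)(y,z)=2\sigma(y\ast z)$ does not involve $\bx$, so the computation differs from the previous one only by removing that factor.

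First, I would invoke the block structure of $A$ as given by \eqref{eq:operator_A}. The finite part decomposes as
\begin{equation*}
\bigl(|A|\,C^2\bigr)_F \le |A_m|\,C^2_F + |\beta_{m-1}|\,\bigl|U_I^{-1}L_I^{-1}C^2_I\bigr|_0\,|A_m|_{c_{m-1}},
\end{equation*}
and the infinite part decomposes as
\begin{equation*}
\bigl(|A|\,C^2\bigr)_I \le |\lambda_m|\Bigl(|A_m|_{r_{m-1}}\,C^2_F + |\beta_{m-1}|\,|A_m|_{m-1,m-1}\,\bigl|U_I^{-1}L_I^{-1}C^2_I\bigr|_0\Bigr)|w_I| + \bigl|U_I^{-1}L_I^{-1}C^2_I\bigr|.
\end{equation*}

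Next, the crucial input is the value of $\|C^2_I\|_s$. Using the remark following Lemma~\ref{lem:maj} that $\alpha_k^s(m)=\alpha_m^s(m)$ for all $k\ge m$, I obtain
\begin{equation*}
\|C^2_I\|_s = \sup_{k\ge m}\bigl\{2|\sigma|\alpha_k^s(m)\bigr\} = 2|\sigma|\alpha_m^s(m).
\end{equation*}
Applying the uniform bound \eqref{eq:maj_unif_expl} with $k=0$ to $U_I^{-1}L_I^{-1}C^2_I$ yields
\begin{equation*}
\bigl|U_I^{-1}L_I^{-1}C^2_I\bigr|_0 \le \frac{\eta\,\|C^2_I\|_s}{C_1(1-\theta)\,\omega_m^{s+s_L}} \le \frac{2\eta|\sigma|\alpha_m^s(m)}{C_1(1-\theta)\,\omega_m^{s+s_L}},
\end{equation*}
which immediately produces the expression for $D^2_F$.

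For the tail bound on $\bigl(|A|C^2\bigr)_{m+k}$, I combine \eqref{eq:maj_w_I} to control $|w_I|_{m+k}\le \eta\theta^k/|\mu_m|$ with \eqref{eq:maj_unif_expl} for $0\le k<M$ and \eqref{eq:maj_unif_tail} for $k=M$ applied to $U_I^{-1}L_I^{-1}C^2_I$. Substituting $\|C^2_I\|_s\le 2|\sigma|\alpha_m^s(m)$ in both cases yields the stated formulas for $D^2_{m+k}$ with $0\le k<M$ and for $D^2_{m+M}$. Finally, the extension \eqref{eq:M_Y} on $M$ guarantees that $\theta^k(m+k)^s$ is monotonically decreasing beyond $M$, so defining $D^2_{m+k}=D^2_{m+M}\,\omega_{m+M}^s/\omega_{m+k}^s$ for $k>M$ is a valid upper bound. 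Since each step is a direct transcription of the corresponding step in Proposition~\ref{prop:D1}, there is no serious obstacle; the only thing to verify carefully is that the constant $\|C^2_I\|_s=2|\sigma|\alpha_m^s(m)$ is indeed attained on the infinite part, which follows from the remark after Lemma~\ref{lem:maj}.
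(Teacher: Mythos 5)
Your proof is correct and follows exactly the approach the paper uses for Proposition~\ref{prop:D1}, which is why the paper omits the proof of Proposition~\ref{prop:D2}. The block decomposition of $A$ from \eqref{eq:operator_A}, the identification $\|C^2_I\|_s = 2|\sigma|\alpha_m^s(m)$ via the remark after Lemma~\ref{lem:maj}, and the applications of \eqref{eq:maj_w_I}, \eqref{eq:maj_unif_expl}, \eqref{eq:maj_unif_tail}, and \eqref{eq:M_Y} are all the intended steps, with only the factor $\|\bx\|_s$ dropped relative to $D^1$.
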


Finally we can sum up all the computations of this Sub-subsection and state the following result:
\begin{proposition}\label{prop:Z2}
Let $M$ be an integer satisfying \eqref{eq:M_A} and \eqref{eq:M_Y}. We define
 $D^1$ (resp. $D^2$) as in Proposition~\ref{prop:D1} (resp. Proposition~\ref{prop:D2}) and  let
\begin{equation*}
Z^2(r) \bydef D^1(\bx)r + D^2r^2.
\end{equation*}
Then for all $y,z \in B_{0}(r)$
\begin{equation*}
A\left(Df\left(\bx+y\right)-A^{\dag}\right)z \leq Z^2(r).
\end{equation*}
\end{proposition}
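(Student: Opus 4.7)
The plan is to simply assemble the bounds already established in this subsection, using the splitting \eqref{eq:splitting} as the starting point. Since $N$ is polynomial of degree two, the Taylor expansion of $Df$ at $\bx$ terminates, giving
\begin{equation*}
\left(Df(\bx+y) - A^\dagger\right)z = \left(Df(\bx) - A^\dagger\right)z + D^2 f(\bx)(y,z),
\end{equation*}
so that, applying $|A|$ componentwise and the triangle inequality, we get
\begin{equation*}
\left\vert A\left(Df(\bx+y)-A^\dagger\right)z\right\vert \leq |A|\left\vert\left(Df(\bx) - A^\dagger\right)z\right\vert + |A|\left\vert D^2 f(\bx)(y,z)\right\vert.
\end{equation*}

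Next, for every $y,z \in B_0(r)$, I would invoke the two intermediate propositions of this subsection to bound each factor: the first proposition of the subsection gives $\left\vert\left(Df(\bx) - A^\dagger\right)z\right\vert \leq C^1(\bx)r$ and the second gives $\left\vert D^2 f(\bx)(y,z)\right\vert \leq C^2 r^2$. Since $|A|$ acts on nonnegative vectors, monotonicity yields
\begin{equation*}
\left\vert A\left(Df(\bx+y)-A^\dagger\right)z\right\vert \leq |A|\,C^1(\bx)\, r + |A|\, C^2\, r^2.
\end{equation*}

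Finally I would apply Proposition~\ref{prop:D1} and Proposition~\ref{prop:D2} to replace $|A|C^1(\bx)$ and $|A|C^2$ by $D^1(\bx)$ and $D^2$ respectively, giving exactly $Z^2(r) = D^1(\bx)r + D^2 r^2$. The argument is therefore just a concatenation of previously established estimates; there is no genuine obstacle, but one should note that the hypotheses on $M$ (namely \eqref{eq:M_A} and \eqref{eq:M_Y}) are precisely what is needed to invoke Propositions~\ref{prop:D1} and \ref{prop:D2}, and that componentwise monotonicity of $|A|$ applied to nonnegative vectors is what makes the factorization $|A(\cdot)| \leq |A|\,|\cdot|$ legitimate here.
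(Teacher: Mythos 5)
Your proof is correct and follows exactly the same route the paper takes: it splits via \eqref{eq:splitting}, applies the two unnumbered componentwise bounds ($\left\vert (Df(\bx)-A^\dagger)z\right\vert \le C^1(\bx)r$ and $\left\vert D^2f(\bx)(y,z)\right\vert \le C^2 r^2$), then uses the componentwise inequality $\left\vert A(\cdot)\right\vert \le \left\vert A\right\vert \left\vert\cdot\right\vert$ and the monotonicity of $\left\vert A\right\vert$ on nonnegative vectors before invoking Propositions~\ref{prop:D1} and \ref{prop:D2}. The paper does not even write this out as a formal proof, just asserting it as an assembly of the preceding estimates, so your proposal is if anything slightly more explicit about the elementary matrix-inequality steps involved.
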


Putting this together with Proposition~\ref{prop:Z1}, we end up with the following result:
\begin{proposition}\label{prop:Z}
Let $M$ be an integer satisfying \eqref{eq:M_A} and \eqref{eq:M_Y}. Let $$Z(r) \bydef Z^1(r)+Z^2(r).$$ Then for all $y,z \in B_{0}(r)$,
\begin{equation*}
\left\vert DT\left(\bx +y\right)z\right\vert \leq Z(r).
\end{equation*}
\end{proposition}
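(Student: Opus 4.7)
The plan is to combine Propositions~\ref{prop:Z1} and \ref{prop:Z2} through the algebraic splitting already spelled out at the opening of Section~\ref{sec:Z}. Since $T(x)=x-Af(x)$, differentiation yields $DT(\bx+y)=I-ADf(\bx+y)$, and then inserting and subtracting $AA^\dagger z$ gives
\begin{equation*}
DT(\bx+y)z \;=\; (I-AA^\dagger)z \;-\; A\bigl(Df(\bx+y)-A^\dagger\bigr)z.
\end{equation*}

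Next, I would take absolute values componentwise and apply the triangle inequality to obtain
\begin{equation*}
|DT(\bx+y)z|_k \;\leq\; |(I-AA^\dagger)z|_k \;+\; \bigl|A(Df(\bx+y)-A^\dagger)z\bigr|_k
\end{equation*}
for every $k\geq 0$. The first summand is dominated componentwise by $Z^1(r)$ thanks to Proposition~\ref{prop:Z1}, and the second by $Z^2(r)$ thanks to Proposition~\ref{prop:Z2}, both valid for $y,z\in B_0(r)$. Summing the two inequalities and invoking the definition $Z(r)=Z^1(r)+Z^2(r)$ yields the claim.

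In this sense there is essentially no obstacle left at this stage: all the substantive work has been absorbed into the two preceding propositions, namely controlling the truncation error $|(w_I)_0-\tilde w|$ via \eqref{eq:err_tilde_w} for $Z^1$, and combining the convolution estimate of Lemma~\ref{lem:maj} with the tail bounds from Lemma~\ref{lem:x_vs_y} and its corollaries for $Z^2$. The only small point worth noting is that the hypothesis on $M$ (requiring both \eqref{eq:M_A} and \eqref{eq:M_Y}) is precisely what guarantees that the tail portions \eqref{eq:Z1_tail} and \eqref{eq:D1_tail} used in the two sub-bounds are simultaneously valid, so that the sum $Z^1(r)+Z^2(r)$ is a bona fide element of $\Omega^s$ and dominates $|DT(\bx+y)z|$ componentwise.
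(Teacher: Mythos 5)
Your proof is correct and takes essentially the same approach as the paper: the decomposition $DT(\bx+y)z=(I-AA^\dagger)z - A\bigl(Df(\bx+y)-A^\dagger\bigr)z$ is displayed at the very start of Section~\ref{sec:Z}, Propositions~\ref{prop:Z1} and \ref{prop:Z2} bound each summand componentwise by $Z^1(r)$ and $Z^2(r)$, and the paper then states Proposition~\ref{prop:Z} with no further argument beyond ``putting this together.'' The only step you make explicit that the paper leaves implicit is the componentwise triangle inequality.
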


\subsection{The radii polynomials and interval arithmetics}
\label{sec:radii_pol}

All the work done up to now in Sections \ref{sec:tridiagonal} and \ref{sec:rigorous_computational method} can be summarized in the following statement:

\begin{theorem}
Let $s>1$, and $s_L>0$. Assume that $f$ is a map from $\Omega^s$ to $\Omega^{s-s_L}$ of the form $f=\cL+N$, where $\cL$ is a tridiagonal operator satisfying \eqref{eq:L_k_tridiagonal}, \eqref{eq:asump1_tridiag} and \eqref{eq:asump2_tridiag}, and where the non linear part $N$ is quadratic. Assume that for some $m\geq 6$ we have computed an approximate zero of $f$, of the form $\bx=(\bx_0,\ldots,\bx_{m-1},0,\ldots,0,\ldots)$, and $D$ an approximate inverse of $Df^{(m)}(\bx)$. Consider
\begin{equation*}
T: 
\left\{
\begin{aligned}
&\Omega^s \to \Omega^s, \\
&x \mapsto x-Af(x),
\end{aligned}
\right.
\end{equation*}
where $A$ is defined as in \eqref{eq:operator_A}. Take $M$ satisfying \eqref{eq:M_A} and \eqref{eq:M_Y} and $L\geq 0$ a computational parameter. Then the bound $Y$ defined in Proposition~\ref{prop:Y} satisfies \eqref{eq:Y} and for all $r>0$, the bound $Z(r)$ defined in Proposition~\ref{prop:Z} satisfies \eqref{eq:Z}.
\end{theorem}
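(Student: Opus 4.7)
My plan is to observe that this theorem is a synthesis of results already established in Sections~\ref{sec:tridiagonal} and \ref{sec:rigorous_computational method}, so the proof should assemble them rather than introduce new estimates. I would organize the argument in three steps, corresponding to the three conclusions packaged in the statement: the mapping property of $T$, the bound \eqref{eq:Y}, and the bound \eqref{eq:Z}.

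For the mapping property, I would first note that $\cL$ sends $\Omega^s$ into $\Omega^{s-s_L}$ by \eqref{eq:asump1_tridiag}, while $N\colon\Omega^s\to\Omega^s$ since $s_N=0$ and $(\Omega^s,*)$ is an algebra for $s>1$; hence $f\colon\Omega^s\to\Omega^{s-s_L}$. Proposition~\ref{prop:A_tri_domain_image} then says that $A$ lifts the output back to $\Omega^s$ (the estimates of Lemmas~\ref{lem:U_I_inv}--\ref{lem:L_I_inv} apply with the input decay rate $s-s_L$ in place of $s$), so $T(x)=x-Af(x)\in\Omega^s$ for any $x\in\Omega^s$.

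Next, I would obtain the $Y$ bound by invoking Proposition~\ref{prop:Y}: since $T(\bx)-\bx=-Af(\bx)$, a componentwise bound on $|Af(\bx)|$ is exactly what is needed. The key input is that $\bx$ has only $m$ nonzero coefficients and $N$ is quadratic, which forces $(f(\bx))_{m+k}=0$ for all $k\ge m-1$; this truncates all the infinite sums arising from $U_I^{-1}L_I^{-1}$ and yields the explicit expressions \eqref{eq:Y_F}--\eqref{eq:Y_k_2}. The algebraic tail extension \eqref{eq:Y_tail} for $k>M$ comes from the monotonicity inequality $\theta^k\omega_{m+k}^s\le\theta^M\omega_{m+M}^s$, which is precisely what \eqref{eq:M_Y} guarantees.

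For the $Z$ bound, I would use the splitting already exploited in Subsection~\ref{sec:Z}, namely
\begin{equation*}
DT(\bx+y)z \;=\; (I-AA^\dagger)z \;-\; A\bigl(Df(\bx+y)-A^\dagger\bigr)z,
\end{equation*}
and bound the two pieces separately. The first piece is controlled by $Z^1(r)$ via Proposition~\ref{prop:Z1}; the only nontrivial point there is that $A_m$ inverts $\tilde K$, not $K$, so the error \eqref{eq:err_tilde_w} between $\tilde w$ and $(w_I)_0$ enters the estimates. For the second piece, the quadratic nature of $N$ gives the exact identity $Df(\bx+y)=Df(\bx)+D^2f(\bx)(y)$, which splits this contribution into a linear-in-$r$ part bounded by $|A|\,C^1(\bx)\le D^1(\bx)$ (Proposition~\ref{prop:D1}) and a quadratic-in-$r$ part bounded by $|A|\,C^2\le D^2$ (Proposition~\ref{prop:D2}); here $C^1$ and $C^2$ come from the convolution estimate of Lemma~\ref{lem:maj}, which is why $m\ge 6$ is assumed. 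Combining through Propositions~\ref{prop:Z2} and \ref{prop:Z} yields $Z(r)=Z^1(r)+Z^2(r)$ satisfying \eqref{eq:Z}, and the proof would conclude there.

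The only potential obstacle is bookkeeping: verifying that the lower bounds \eqref{eq:M_A} and \eqref{eq:M_Y} on $M$, together with the standing hypotheses $m\ge k_0$, $m\ge 6$, $\delta<1/2$ and $L\ge k_0$, are simultaneously in force. All these conditions are explicit and decoupled, so taking $M$ above the maximum of the two prescribed lower bounds settles the matter. In effect, the theorem is a compatibility statement, and its proof reduces to one paragraph citing Propositions~\ref{prop:A_tri_domain_image}, \ref{prop:Y} and \ref{prop:Z}.
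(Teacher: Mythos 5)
Your proposal is correct and mirrors the paper's intent exactly: the theorem is stated as a summary of Sections~\ref{sec:tridiagonal} and \ref{sec:rigorous_computational method}, and the paper gives no separate proof because, as you observe, it reduces to citing Propositions~\ref{prop:A_tri_domain_image}, \ref{prop:Y}, and \ref{prop:Z} together with the compatibility of the standing hypotheses on $m$, $M$, $L$, and $\delta$.
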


Now that we have found bounds $Y$ and $Z(r)$ that satisfy \eqref{eq:Y} and \eqref{eq:Z}, we must find a radius $r>0$ such that $\| Y+Z(r)\|_s<r$ in order to apply Theorem~\ref{thm:Tfixedpt}. By definition of the norm $\left\Vert \cdot\right\Vert_s$,
 it amounts to find an $r>0$ such that, for every $k\geq 0$, the {\em radii polynomial} $P_k(r)$ satisfies
\begin{equation*}
P_k(r) \bydef Y_k + Z_k(r) - \frac{r}{\omega^s_k}<0.
\end{equation*}
Note that since we constructed $Y$ and $Z$ in such a way that for every $k\geq M$,
\begin{equation*}
Y_{m+k}=Y_{m+M}\frac{\omega_{m+M}^s}{\omega_{m+k}^s} \quad\text{and}\quad Z_{m+k}=Z_{m+M}\frac{\omega_{m+M}^s}{\omega_{m+k}^s},
\end{equation*}
it is enough to find an $r>0$ such that for all $0\leq k\leq m+M$, $P_k(r)<0$. In order to do so, we  numerically
compute, for each $0\leq k \leq m+M$,
\begin{equation*}
I_k \bydef \{ r>0 \ |\ P_k(r)<0\},
\end{equation*}
and
\begin{equation*}
I \bydef \bigcap_{k=0}^{m+M} I_k.
\end{equation*}
If $I$ is empty, then the proof fails, and we should try again with some larger parameters $m$ and $M$. If $I$ is non empty, we pick an $r \in I$ and check rigorously, using the interval arithmetics package INTLAB \cite{Ru99a}, that for all $0\leq k\leq m+M$, $P_k(r)<0$, which according to Theorem~\ref{thm:Tfixedpt}, proves that $T$ defined in (\ref{eq:T}) is a contraction on $B_s(\bx,r)$, thus yielding the existence of a unique solution of $f(x)=0$ in $B_s(\bx,r)$.

\section{An example of application} \label{sec:example}

We present in this Section an example of equation, for which it is possible to
 apply the method developed in this paper. We first explain the link between the equation that we study (cf. \eqref{eq:example} below) and the tridiagonal operator defined in Section~\ref{sec:tridiagonal}. Then,
 we explain what are in this example the values of the various constants and parameters of
our  method.
\medskip

Equations of the following form:
\begin{align}\label{eq:example}
& -(2 + \cos \xi) u''(\xi) + u(\xi) = -\sigma u(\xi)^2 + g(\xi), \\ 
& u'(0) = u'(\pi) = 0, \nonumber
\end{align}
where $g$ is a $2\pi$-periodic even smooth function, fall into the framework developed in Section~\ref{sec:tridiagonal}. Consider indeed the cosine Fourier expansions of $u$ and $g$:
\[
u(\xi) =\sum_{k\in\mathbb{Z}} x_k \cos(k \xi),~~~~ g(\xi) = \sum_{k\in\mathbb{Z}} g_k \cos(k \xi).
\]
Then, (\ref{eq:example}) can be rewritten as $f(x)=0$, where
\begin{equation*}
f_0(x) \bydef x_0 + x_1 +\sigma \left(x\ast x\right)_0 - g_0,
\end{equation*}
and for all $ k\geq 1$,
\begin{equation} \label{eq:f_k_example}
f_k(x) \bydef
 \frac{1}{2} (k-1)^2 x_{k-1}+ (1 + 2k^2) x_k + \frac{1}{2}(k+1)^2 x_{k+1} 
+ \sigma \left(x\ast x\right)_k  - g_k.
\end{equation}
We see that the linear part of \eqref{eq:f_k_example} is,
 as in (\ref{eq:L_k_tridiagonal}), given by 
\[
\cL_k(x) =\lambda_{k} x_{k-1} + \mu_k x_k + \beta_k x_{k+1},
\]
with 
\begin{equation*}
\mu_0 \bydef 1,\quad \beta_0 \bydef 1,
\end{equation*}
and for all $k\geq 1$,
\[
\lambda_{k} \bydef  \frac{1}{2} (k-1)^2,~~ \mu_k \bydef (1 + 2k^2)~~ {\rm and } ~~ \beta_k \bydef \frac{1}{2}(k+1)^2.
\]
Let us fix some $m\geq 2$. With 
\begin{equation*}
C_1=2, \quad C_2=3 \quad\text{and}\quad \delta =\frac{1}{4}\frac{(m+1)^2}{m^2+\frac{1}{2}},
\end{equation*}
we get
\begin{equation*}
\forall ~Êk\geq 1,\quad \left| \frac{\lambda_k}{k^{2}}\right|,\left| \frac{\mu_k}{k^{2}}\right|,\left| \frac{\beta_k}{k^{2}}\right|\leq C_2,
\end{equation*}
together with
\begin{equation*}
\forall ~ k\geq m,\quad C_1\leq  \left| \frac{\mu_k}{k^{2}}\right| \quad \text{and}\quad \left| \frac{\lambda_k}{\mu_k} \right|, \left| \frac{\beta_k}{\mu_k} \right| \le \delta.
\end{equation*}

We now focus on the example when 
\begin{equation*}
g(\xi) \bydef \frac{1}{2} + 3\cos(\xi) + \frac{1}{2}\cos(2\xi),
\end{equation*}
so that $u(\xi)=\cos(\xi)$ is a trivial solution for $\sigma=0$. 
We are going to use rigorous computations in order to prove the existence of solutions for $\sigma\neq 0$,
 and to compute these solutions.


Starting from $\sigma=0$, we first use standard pseudo-arclength continuation techniques to numerically get some nontrivial approximate solutions for $\sigma\neq 0$. We computed 1250 different solutions (675 for $\sigma >0$ and 675 for $\sigma <0$). See Figure~\ref{fig:diag_sigma} for a diagram summing up those computations, where each point represents a solution of \eqref{eq:example}. 

\begin{figure}[htbp]
\begin{center}
\includegraphics[width=8cm]{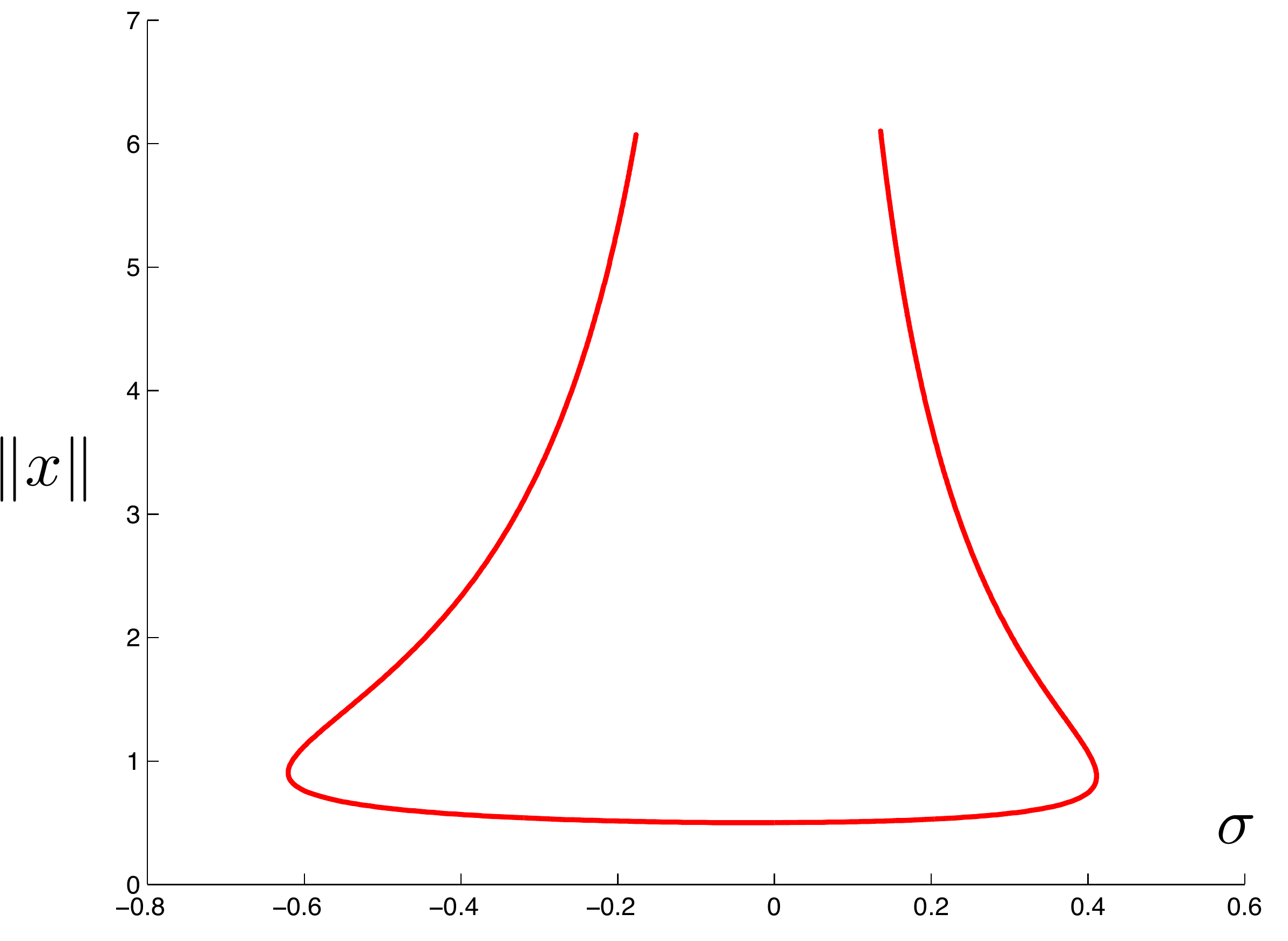}
\caption{Branch of solutions of \eqref{eq:example}.}
\label{fig:diag_sigma}
\end{center}
\end{figure}

Then we use the rigorous computation method described in this paper to prove, for each numerical solution, the existence of a true solution in a small neighbourhood of the numerical approximation. We keep $m=20$ Fourier coefficients
 for the numerical computation, and use $M=20$ and the decay rate $s=2$ for the proof. The bounds of Lemma~\ref{lem:maj} as well as the error on $\tilde\omega$ (\ref{eq:err_tilde_w}) are computed with $L=100$. For each numerical solution, the proof is
successful. The set $I$ defined in Section~\ref{sec:radii_pol} on which all radii polynomials should be negative always contains $[4 \times10^{-11},10^{-4}]$, and we rigorously prove 
using interval arithmetics that they are indeed all negative for $r=10^{-10}$. Hence the assumptions
 of Theorem~\ref{thm:Tfixedpt} hold and as a consequence,  within a ball of radius $r=10^{-10}$ in $\Omega^s$ centered on the numerical approximation, there exists a unique solution to (\ref{eq:example}). Therefore the existence of the solutions represented in
Figure~\ref{fig:diag_sigma} is rigorously proven, within a margin of error that is too small to be depicted. The codes used
to perform the proofs can be found in \cite{webpage}.

Notice that existence of  solutions of (\ref{eq:example}) could certainly have been obtained in different and more classical ways, for example using perturbative methods when $\sigma$ is close to $0$, or using a variational approach (that is, considering (\ref{eq:example}) as the Euler-Lagrange equation related to the critical points of a functional), or even using topological tools such as the Leray-Schauder theory. The  advantage of our method is that it gives us more quantitative information than those approaches: indeed it enables to provide more than one solution for some values of $\sigma$, and, maybe more importantly, it gives a very precise localization of this (or these) solution(s) in terms of Fourier coefficients (something that looks very hard to obtain with qualitative PDEs methods).

\section{Conclusion and Perspectives} \label{sec:conclusion}

A first interesting future direction of research would consist in  adapting
 our approach to the rigorous computation connecting orbits of ODEs (using spectral methods). For instance, we would like to investigate the possibility of combining Hermite spectral methods with our approach to compute homoclinic orbits (e.g. see \cite{MR2333727,MR2709491}). Since the differential operator in frequency space of the Hermite functions is tridiagonal, adapting our method to this class of operator could lead to a new rigorous numerical method for connecting orbits. 

It would also be interesting to adapt our method to the case of solutions belonging to the sequence space 
$$
\ell_\nu^1 = \{ x=(x_k)_{k \ge 0} : \|x\|_\nu \bydef \sum_{k\ge 0} |x_k| \nu^k<\infty \}
$$
for some $\nu \ge 1$. With this choice of Banach space, we could use
 the fact that $\ell_\nu^1$ is naturally a Banach algebra under discrete convolutions. This could greatly simplify the nonlinear analysis. 

Note that assumption \eqref{eq:asump2_tridiag} requires the tridiagonal operator to have symmetric ratios between the diagonal terms and the upper and lower diagonal terms. This is a restriction that could hopefully be relaxed. Since many interesting problems involve tridiagonal operators with non symmetric ratios (as in the case of differentiation in frequency space of the Hermite functions), we believe that this is a promising route to follow.


Finally, generalizing our approach to problems with block-tridiagonal structures could also be a valuable project.

\section*{Acknowledgement}
The research leading to this paper was partially funded by the french ``ANR blanche'' project Kibord: ANR-13-BS01-0004.

\end{document}